\documentclass[final]{siamart190516}

\usepackage{epsfig, graphicx}
\usepackage{latexsym,amsfonts,amsbsy,amssymb}
\usepackage{amsmath,amscd}
\usepackage{cite}
\usepackage{bm}
\usepackage{algorithm}
\usepackage{diagbox}
\usepackage{tikz}
\usetikzlibrary{matrix}
\usetikzlibrary{shapes,patterns,arrows,snakes,decorations.shapes}
\usetikzlibrary{positioning,fit,calc}
\usetikzlibrary{plotmarks}
\usepackage{pgfplots,pgfplotstable}
\pgfplotsset{width=10cm, compat=newest}

\newcommand{\dive}{{\rm div}}
\DeclareMathOperator{\ddiv}{div}

\renewcommand{\vec}[1]{\ensuremath{\boldsymbol{#1}}}

\def\Inf{\operatornamewithlimits{inf\vphantom{p}}}

\usepackage{xspace,color}


\title{Monolithic multigrid for a reduced-quadrature discretization of poroelasticity\thanks{Submitted to the editors on 6/25/2021.
		\funding{The work of JA, XH, and PO was partially funded by National Science Foundation grant DMS-1620063. The work of S.M. was partially funded by an NSERC Discovery Grant. The work of PO was supported by the U.S.~Department of Energy, Office of Science, Office of Advanced Scientific Computing Research, Applied Mathematics program.  Sandia National Laboratories is a multimission laboratory managed and operated by National Technology and Engineering Solutions of Sandia, LLC., a wholly owned subsidiary of Honeywell International, Inc., for the U.S. Department of Energy's National Nuclear Security Administration under grant DE-NA-0003525. This paper describes objective technical results and analysis. Any subjective views or opinions that might be expressed in the paper do not necessarily represent the views of the U.S. Department of Energy or the United States Government.
}}}


\author{James H. Adler\thanks{Department of Mathematics, Tufts University, Medford, MA 02155, USA
		(\email{james.adler@tufts.edu}, \email{xiaozhe.hu@tufts.edu}).}
	\and Yunhui He\footnotemark[4]\ \thanks{Department of Computer Science, The University of British Columbia, Vancouver, BC, V6T 1Z4, Canada (\email{yunhui.he@ubc.ca}).}
	\and Xiaozhe Hu\footnotemark[2]
	\and Scott MacLachlan\thanks{Department of Mathematics and Statistics, Memorial University of Newfoundland, St. John's, NL A1C 5S7, Canada
		(\email{yunhui.he@mun.ca}, \email{smaclachlan@mun.ca}).}
	\and Peter Ohm\thanks{Sandia National Laboratories, Albuquerque, NM 87185 (\email{pohm@sandia.gov})}}

\newsiamremark{remark}{Remark}

\begin{document}

\maketitle
\begin{abstract}
  Advanced finite-element discretizations and preconditioners for models of poroelasticity have attracted significant attention in recent years.  The equations of poroelasticity offer significant challenges in both areas, due to the potentially strong coupling between unknowns in the system, saddle-point structure, and the need to account for wide ranges of parameter values, including limiting behavior such as incompressible elasticity.  This paper was motivated by an attempt to develop monolithic multigrid preconditioners for the discretization developed in \cite{rodrigo2018new}; we show here why this is a difficult task and, as a result, we modify the discretization in \cite{rodrigo2018new} through the use of a reduced quadrature approximation, yielding a more ``solver-friendly'' discretization.  Local Fourier analysis is used to optimize parameters in the resulting monolithic multigrid method, allowing a fair comparison between the performance and costs of methods based on Vanka and Braess-Sarazin relaxation.  Numerical results are presented to validate the LFA predictions and demonstrate efficiency of the algorithms.  Finally, a comparison to existing block-factorization preconditioners is also given.
\end{abstract}

\begin{keywords}
 Biot Poroelasticity; Reduced Quadrature Discretizations and Finite Elements; Monolithic Multigrid; Local Fourier Analysis
\end{keywords}

\begin{AMS}
65F08, 65F10, 65M55, 65M60, 65N22, 76S05
\end{AMS}

\section{Introduction}
Poroelasticity describes a number of processes modeled by flows in deformable porous media,
which are of interest in geoscience, biomedical science, and engineering.  In this paper,
we consider Biot's model for linear poroelasticity \cite{biot1,biot2}, a coupled, multiphysics system of partial differential equations (PDEs).
There are many challenges in developing both discretizations and fast and robust solvers for these equations.
For the discretization, using mixed finite elements, it is necessary to carefully choose approximation spaces in order to avoid
spurious oscillations in the pressure field as well as to achieve robustness to variations in the PDE parameters, particularly in extreme limits, such as incompressibility.
After discretization, the resulting linear system is of saddle-point type, requiring special solvers to deal with the indefiniteness, the usual
ill-conditioning of the discretized system, and to achieve similar robustness with respect to the physical parameters.

Many different types of discretizations exist for the various formulations of Biot's model.
For instance, a finite-volume method on a staggered grid is provided for the three-dimensional
Biot poroelastic system in \cite{naumovich2008multigrid}.
For the
two-field formulation, where displacement and pressure are the unknowns, stable Taylor-Hood elements are used in \cite{MuradLoulaThome,MuradLoula94,MuradLoula92}.  In \cite{rodrigo2016stability}, a MINI element and a stabilized P1-P1 finite-element discretization
are presented, and a stabilization term is introduced to remove non-physical oscillations,
leading to  monotone behavior of the stabilized schemes.  More recently, a weak Galerkin finite-element method is proposed on general shape-regular polytopal meshes,
which demonstrates the robustness of the proposed weak Galerkin discretization \cite{hu2018weak}.
For three-field formulations, where displacement, pressure, and the Darcy velocity are the unknowns,  a nonconforming finite-element approach for the three-field formulation, using Crouzeix-Raviart finite elements for the displacements,
lowest-order Raviart-Thomas-N\'{e}d\'{e}lec elements for the Darcy velocity,
and a piecewise-constant approximation for the pressure, is considered in~\cite{yiCouplingNonconformingMixed2013} for the two-dimensional case on rectangular grids.  It is extended to general cases in \cite{hu2017nonconforming}, where a mass-lumping technique is introduced for the Raviart-Thomas-N\'{e}d\'{e}lec elements to eliminate the Darcy velocity, reducing the computational cost. A family of parameter-robust schemes is found in \cite{hong2018parameter} and a general theory for the error analysis is introduced. More recently, hybridization schemes are developed in~\cite{NiuRuiHu2019,frigoEfficientSolversHybridized2020}.
For a four-field formulation, with the stress tensor, fluid flux, displacement, and pore pressure as unknowns, stable discretizations are developed in \cite{yiConvergenceAnalysisNew2014,lee_four_field}.

In this work, we consider a stabilized finite-element method based on the popular P1-RT0-P0 discretization of the three-field formulation developed in \cite{rodrigo2018new}, where face bubble functions are used to enrich the P1 space for the displacements.
A perturbation of the bilinear form allows for local elimination of the bubble functions,
leading to the same number of degrees of freedom as the P1-RT0-P0 discretization.
This type of discretization is appealing, as it leads to a minimally-sized system of equations, yet lends itself to robust linear solvers, independent of discretization and physical parameters. While we do not consider the perturbation in this paper, the main goal here is to extend this bubble-enriched discretization to make it amenable to efficient solvers, such as the  monolithic multigrid solvers described below.

After discretization, large linear systems of equations must be solved to compute the finite-element approximation to the solution of the poroelasticity equations.  This requires development of specialized preconditioners, and both block preconditioning and monolithic multigrid methods have been successfully
applied, especially for Biot's model.
For instance, robust block preconditioners are studied for the two-field formulation in \cite{CASTELLETTO2016894,NME:NME2702,adler2018robust,Castelletto2015,WHITE201655}, and for the three-field formulation in \cite{hong2018parameter,adler2019robust,FERRONATO2019108887,CASTELLETTO2016894}.
A multigrid method using alternating line Gauss-Seidel relaxation for the three-dimensional
Biot poroelasticity system is presented in \cite{naumovich2008multigrid}, which focuses on
the study of the grid-transfer operators in the multigrid method. For the quasi-static Biot model, point-wise
and line-wise box Gauss-Seidel relaxation are investigated in \cite{rodrigo2010geometric},
where local Fourier analysis (LFA) is used to help analyze and predict performance of the algorithms.
In \cite{NLA:NLA2074}, an Uzawa relaxation is employed and analyzed using LFA.
The fixed-stress split method
is used as a relaxation scheme for the two-field formulation of Biot's consolidation model in \cite{gaspar2017fixed},
where again LFA is applied to study the convergence of the multigrid method.
Similarly, a new version of the fixed-stress splitting method \cite{borregales2019partially}
is proposed for solving coupled flow and geomechanics in porous media,
modeled by a two-field formulation of Biot's equations.
Finally, multigrid waveform relaxation based on a point-wise Vanka relaxation
method is proposed for solving a collocated finite-difference discretization
of the linear Biot model in \cite{franco2018multigrid}.

Despite the work mentioned above, applications of monolithic multigrid for the discretized systems of Biot's model are rare.
In particular, given the scalable preconditioning results shown in \cite{adler2019robust} for the discretization from \cite{rodrigo2018new}, a natural question to ask is whether monolithic multigrid can compete with efficient block preconditioners.  As discussed below, initial work for this paper focused on the extension of typical monolithic multigrid relaxation schemes, known as Braess-Sarazin \cite{BRAESS1997} and Vanka \cite{VANKA1986} relaxation, to the three-field discretization from \cite{rodrigo2018new}.  While direct extensions of these methods lead to efficient preconditioners for some physical parameters, we found that they did not extend effectively to the limit of an incompressible material.  Following \cite{Schoberl1998, Schoberl1999a, Schoberl1999b}, we recognize this as an inherent consequence of the fact that the bubble-enriched P1 space does not admit a local basis for the space of divergence-free functions and, as such, standard multigrid approaches for the elasticity block are not parameter-robust.  To overcome this difficulty, we modify the discretization from \cite{rodrigo2018new} to make use of the reduced quadrature approach \cite{2013BoffiD_BrezziF_FortinM-aa, Schoberl1998, Schoberl1999a, Schoberl1999b}, which replaces exact integration of the divergence terms with that of an $L^2$ projection. Such a modification has been adopted for poroelasticity problems in~\cite{yi2017study} in order to handle locking issues when $\lambda \rightarrow \infty$. Here, we find that it also provides a ``solver-friendly'' discretization.  One of our contributions in this work is to show that using the reduced-quadrature approach still results in a well-posed discretization, which is parameter-robust, and does not lose accuracy in comparison to the discretization of \cite{rodrigo2018new}.  

Having constructed the reduced quadrature discretization, the remainder of this paper focuses on the development and analysis of optimal monolithic multigrid preconditioners for it.  In particular, we apply LFA \cite{MR1807961,wienands2004practical} to the components of the multigrid method in order to optimize parameters within the commonly used Braess-Sarazin and Vanka relaxation schemes.  In recent years, LFA has been widely used for this purpose in many contexts; for systems of PDEs, such as we consider here, it has been applied to discretizations of the Stokes equations \cite{NLA2147, HMFEMStokes, MR2840198, MR3488076, MR3217219} and, in a more limited manner, to discretizations of poroelasticity \cite{NLA:NLA2074}.  Numerical results confirm the accuracy of the LFA predictions.

In what follows, we address how the incompressibility constraint associated with the elasticity block of the coupled system affects the convergence of our proposed multigrid algorithm.  In particular, we show that the ideas of reduced-quadrature discretization and divergence-free interpolation, originally proposed and analyzed for the incompressible elasticity subproblem, can be extended to the fully-coupled Biot model.  We show that the modified discretization remains well-posed, and that we are able to develop a robust monolithic multigrid approach for the resulting three-field formulation.  Specifically, this paper is organized as follows.
In  \Cref{sec:discretization}, we introduce the stabilized finite-element discretization provided in \cite{rodrigo2018new}
for the three-field formulation of Biot's model, as well as the reduced-quadrature discretization, for which proofs of well-posedness and error estimates are given.
In \Cref{sec:mmg}, we review monolithic multigrid, with focus on both the choice of relaxation scheme for solving the discretized system
and the use of divergence-preserving interpolation operators to achieve robustness in the nearly incompressible case.
LFA for this discretization is considered in  \Cref{sec:LFA}. In  \Cref{sec:Numrical}, numerical results are presented to show the efficiency of the proposed solvers, and comparisons are given between existing block preconditioning approaches and the monolithic multigrid methods proposed here.  Finally, conclusions and remarks are drawn in  \Cref{sec:Summary}.

\section{Biot's Three-Field Formulation and its Discretization}\label{sec:discretization}

The mathematical model of the \emph{three-field formulation} of the consolidation process is described by the following system
of PDEs in a domain $\Omega\subset \mathbb{R}^{d}, d = 2, 3,$ with sufficiently smooth
boundary, $\Gamma=\partial\Omega$:
\begin{align}
	-{\rm div}(2\mu\varepsilon(\bm{u})) - \lambda\nabla({\rm div}{\bm u}) +\alpha \nabla p &=\rho \bm{g},\label{Biot-eq1}\\
	\bm{K}^{-1} \mu_f \bm{w} + \nabla p&= \rho_f\bm{g},\label{Biot-eq2}\\
	\frac{\partial}{\partial t} \left(\frac{1}{M}p+\alpha {\rm div}\bm{u}\right) +{\rm div}\bm{w}&=f.\label{Biot-eq3}
\end{align}
Here, $\mu_f$ is the viscosity of the fluid, $M$ is the Biot modulus, $\rho$ and $\rho_f$ are the bulk density and fluid density, respectively, and $\alpha = 1-\frac{K_b}{K_s}$ is the Biot-Willis
constant, with $K_b$ and $K_s$ denoting the drained and the solid-phase bulk moduli, respectively.  The absolute permeability tensor is given by $\bm{K}$ which is symmetric positive definite.
The strain tensor is denoted by $\varepsilon(\bm{u}) = \frac{1}{2}(\nabla\bm{u}+\nabla\bm{u}^{\top})$.  The unknown functions are the displacement vector $\bm{u}$, the pore pressure $p$, and the percolation velocity
of the fluid, or Darcy velocity, relative to the soil, $\bm{w}$. The vector-valued function
$\bm{g}$ represents the gravitational force.
Finally, $\displaystyle \mu = \frac{E}{2+2\nu}$ and $\displaystyle \lambda = \frac{E\nu}{(1-2\nu)(1+\nu)}$ are the Lam\'{e} coefficients where $\nu$ is the Poisson ratio and $E$ is Young's modulus.
As $\nu \rightarrow 0.5$, we have $\lambda \rightarrow \infty$, the incompressible limit that causes difficulties in numerical simulations.  Other limits that cause numerical difficulties are when the permeability, $\bm{K}\rightarrow \bm{0}$, and \cref{Biot-eq2} is dominated by its first term or, when discretized, the timestep goes to zero and \cref{Biot-eq3} is dominated by the term from timestepping. Finally, this system is subject to boundary conditions of various forms.  One typical example is:
\begin{eqnarray*}
	p &=& 0, \quad {\rm for} \,\, x\in \bar{\Gamma}_t,\quad 2\mu\varepsilon(\bm{ u})\bm{n} +\lambda {\rm div}(\bm{u})\bm{n}=\bm{0},\quad {\rm for} \,\, x\in \Gamma_t, \\
	\bm{u} &=& \bm{0},\quad {\rm for} \,\, x\in \bar{\Gamma}_c,\quad \frac{\partial p}{\partial \bm{n}}= 0,\quad {\rm for} \,\, x\in \Gamma_c,
\end{eqnarray*}
where $\bm{n}$ is the outward unit normal to the boundary, $\bar{\Gamma} = \bar{\Gamma}_t\bigcup \bar{\Gamma}_c$, with $\Gamma_t$ and $\Gamma_c$ being open (with respect to $\Gamma$) subsets of $\Gamma$ with nonzero measure.  Appropriate initial conditions for the pressure and displacement (more precisely, for ${\rm div}~\vec{u}$) are also needed.

\subsection{Finite-Element Discretization}
Following \cite{rodrigo2018new}, we consider a variational problem such that for each $t\in (0,T]$, $(\bm u(t), p(t), \bm w(t))\in \bm V \times Q \times \bm W$, with
\begin{eqnarray*}
	&&{\bm V} = \{{\bm u}\in {\bm H}^1(\Omega) \ |  \ {\bm u}|_{\overline{\Gamma}_c} = {\bm 0} \},
	\quad \quad \quad \quad Q = L^2(\Omega),\\
	&&{\bm W} = \{{\bm w} \in \bm{H}(\dive,\Omega) \ | \  ({\bm w}\cdot {\bm n})|_{\Gamma_c} = 0\},
\end{eqnarray*}
where ${\bm H}^1(\Omega)$ is the space of square integrable
vector-valued functions whose first derivatives are also square
integrable, and $\bm{H}(\dive,\Omega)$ contains the square integrable
vector-valued functions with square integrable divergence.

Using backward Euler as a time discretization on a
time interval $(0,T]$ with constant time-step size
$\tau$, the discrete variational form for Biot's three-field consolidation model, \cref{Biot-eq1}-\cref{Biot-eq3}, is written as:  Find $(\bm u_h^m, p_h^m, \bm w_h^m)\in \bm V_h \times Q_h \times \bm W_h$
such that
\begin{align}
	a(\bm{u}_h^m,\bm{v}_h) - (\alpha p_h^m,\dive \bm{v}_h) & =
	(\rho\bm g,\bm{v}_h), \quad \forall \  \bm{v}_h\in \bm V_h, \label{discrete1}\\
	\tau({\bm K}^{-1}\mu_f\bm{w}_h^m,\bm{r}_h) - \tau(p_h^m,\dive \bm{r}_h) &=
	\tau(\rho_f {\bm g}, \bm{r}_h), \quad \forall \ \bm{r}_h\in \bm W_h,\label{discrete2}\\
	-\left(\frac{1}{M} p^m_h ,q_h \right)-\left(\alpha\dive \bm{u}_h^m,q_h\right) - \tau(\dive \bm{w}_h^m,q_h)&=
	-(\hat{f}, q_h), \quad \forall \ q_h \in Q_h,\label{discrete3}
\end{align}
where $(\cdot,\cdot)$ denotes the standard $L^2(\Omega)$ inner product.
Here, $(\bm{u}_h^m, p_h^m, \bm{w}_h^m) $
is an approximation to
$\left(\bm{u}(\cdot, t_m), p(\cdot, t_m), \bm{w}(\cdot, t_m)\right), $
at time
$t_m = m\tau, \ m = 1,2,\ldots$, $(\hat{f}, q_h) =\tau(f,q_h) + \left(\frac{1}{M} p^{m-1}_h ,q_h \right)+\left(\alpha\dive \bm{u}_h^{m-1},q_h\right)$ and
$a(\bm{u},\bm{v}) = 2\mu\left ({ \varepsilon}(\bm{u}),{ \varepsilon}(\bm{v})\right ) +
\lambda\left ( \dive\bm{u},\dive\bm{v}\right )
$
is the usual weak form for linear elasticity.
Note that \cref{discrete2} has been scaled by $\tau$ and \cref{discrete3} has been scaled by $-1$ to make the system symmetric.

For finite-element spaces, we consider linear elements (P1), enriched with bubble functions on faces for $\bm{V}_h\subset \bm{V}$.
These face-normal bubble functions are quadratic in 2D and cubic in 3D.  Their degrees of freedom are defined as the integrated normal displacement across the associated faces.  This space is covered in-depth in Chapter 2.1 of \cite{GR1986}.
We choose $Q_h\subset Q$ as the piecewise constant space (P0) for the pressure, and $\bm{W}_h\subset\bm{W}$ as the standard lowest-order Raviart-Thomas space (RT0)
for the Darcy velocity.
It has been shown that this discretization is a stable finite-element approximation, see \cite{rodrigo2018new}.

Finally, this discrete variational form can be represented in block matrix form as
\begin{equation}\label{block_form}
	\mathcal{ A} \left(
	\begin{array}{c}
		{\bm u} \\
		{\bm w} \\
		p
	\end{array}
	\right) =
	{\bm b}, \ \ \hbox{with} \ \
	\mathcal{ A} = \left(
	\begin{array}{ccc}
		A_{\bm{u}}  & 0 & \alpha B_{\bm{u}}^{\top}\\
		0  & \tau M_{\bm w}& \tau B_{\bm w}^{\top} \\
		\alpha B_{\bm{u}} & \tau B_{\bm w} & -\frac{1}{M} M_p
	\end{array}
	\right).
\end{equation}
The blocks in the matrix $\mathcal{A}$ correspond to the following
bilinear forms:
\begin{align*}
	a(\bm{u}_h,\bm{v}_h) &\rightarrow A_{\bm u},   &\quad
	-( \dive \bm{u}_h, q_h)   &\rightarrow B_{\bm u}&\quad -(\dive \bm{w}_h,q_h) &\rightarrow B_{\bm w},\\
	({\bm K}^{-1}\mu_f\bm{w}_h,\bm{r}_h) &\rightarrow  M_{\bm w}, &\quad \left( p_h ,q_h \right)     &\rightarrow M_p. &&
\end{align*}

\subsection{Solver Incompatibility}\label{sec:solve1}
While the above discretization is well-posed and, as shown in \cite{rodrigo2018new}, is robust to variations in the physical and discretization parameters,
solving the resulting linear system in a similarly parameter-robust manner is not straightforward.  A block-preconditioning framework was proposed in \cite{adler2019robust} for the solution of the linear system and the proposed approaches were proven to be parameter-robust under the assumption that each diagonal block of the preconditioner can be solved in a parameter-robust manner.   While \cite{adler2019robust} contains a detailed parameter study, the primary measure of convergence there was in outer iterations of FGMRES, where the inner iterations (to approximate solves with the diagonal blocks of the block preconditioners) were done to fixed tolerances with AMG-preconditioned GMRES.  As numerical results presented below in \Cref{sec:Numrical} will show, while the outer iterations reported in \cite{adler2019robust} are robust to the physical parameters (in particular, the incompressible limit), the inner iterations are not.

In preliminary investigations for this paper, similar behavior was seen for the monolithic multigrid methods detailed below.  There are several common relaxation schemes considered when applying monolithic multigrid to block-structured saddle-point problems, such as the system in \cref{block_form}, which will be described in more detail below.  Braess-Sarazin approaches use approximations to the block factorization of $\mathcal{A}$ as relaxation schemes.  \textit{Exact} Braess-Sarazin relaxation (BSR) is based on exact solution of the approximate Schur complement(s) in such a factorization, while \textit{inexact} Braess-Sarazin methods also introduce an approximation to the Schur complement(s).  An alternative approach is to use Vanka relaxation schemes (see \Cref{subsec:Vanka}), which are block overlapping Schwarz methods, with small blocks chosen to reflect the saddle-point structure of the system.  \cref{tab:naive} shows that, while exact Braess-Sarazin relaxation is effective in a parameter-independent manner, convergence suffers for both inexact Braess-Sarazin and Vanka relaxation schemes.

\begin{table}[htp]
	\footnotesize
	\begin{center}
		\caption{Measured convergence factors for monolithic multigrid applied to \cref{block_form}, with $\bm{K} = k \bm{I}$, $k=10^{-6}$ and varying $\nu$, for a uniform mesh $h=1/64$ of the unit square and a time step size of $\tau=1.0$.}
		\label{tab:naive}
		\begin{tabular}{|l|l l l l l l|}\hline
			\backslashbox{}{$\nu$} & $0.0$ & $0.2$ & $0.4$ & $0.45$ & $0.49$ & $0.499$\\\hline
			Exact BSR          & 0.067 & 0.067 & 0.067 &  0.067 & 0.067  & 0.067  \\
			Inexact BSR          & 0.440 & 0.471 & 0.586 &  0.659 & 0.790  & 0.968  \\
			Vanka & 0.515 & 0.513 & 0.589 &  0.659 & 0.794  & 0.970  \\\hline
		\end{tabular}
	\end{center}
\end{table}

The degradation in performance from exact to inexact Braess-Sarazin relaxation  as $\nu \rightarrow 0.5$ in \cref{tab:naive} was carefully studied.  For both Braess-Sarazin variants considered, we took a Schur complement onto the displacement degrees of freedom, and invested significant effort into constructing relaxation schemes for that Schur complement that would lead to a robust inexact Braess-Sarazin variant.
The primary source of the problem became clear when looking at the dominant errors in the displacements after running two-grid cycles with either the inexact Braess-Sarazin or Vanka relaxation, visualized for the Vanka case in \cref{fig:globalDivFree}.  In essence, this error reflects a \textit{globally-supported} divergence-free null-space that is difficult to eliminate using local relaxation schemes.  As we next show, this arises from the exact evaluation of the $(\ddiv\bm{u}, \ddiv\bm{v})$ term within the discretization, resulting in a discretization
that is inherently not ``solver-friendly'', due to the lack of a local basis for the space of (nearly) divergence-free functions.  To address this, we modify the discretization using a reduced quadrature approach \cite{malkus1978mixed,2013BoffiD_BrezziF_FortinM-aa}, as suggested in~\cite{yi2017study} for poroelasticity problems.

\begin{figure}[htp]
	\centering
	\includegraphics[scale=0.18]{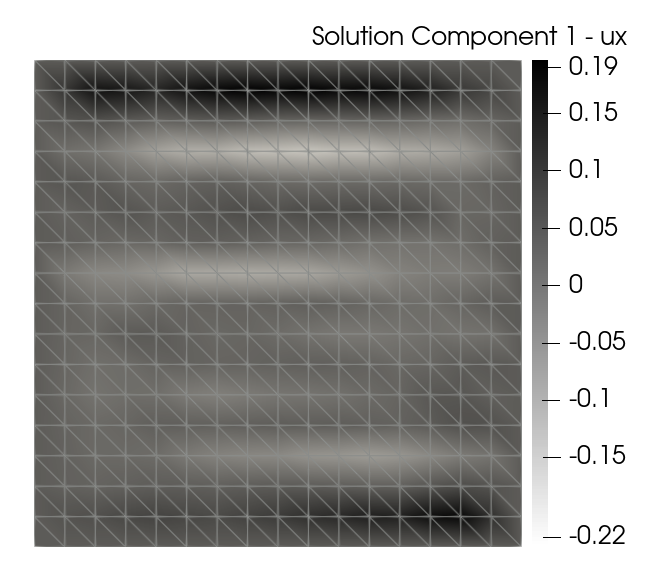}
	\includegraphics[scale=0.18]{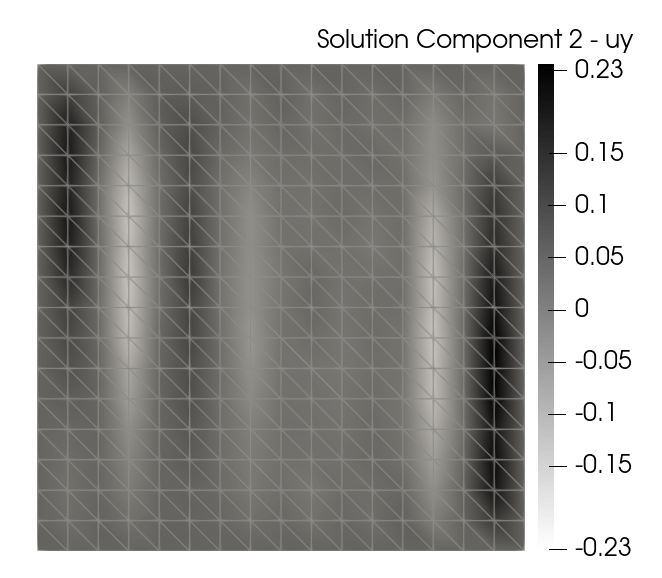}
	\includegraphics[scale=0.18]{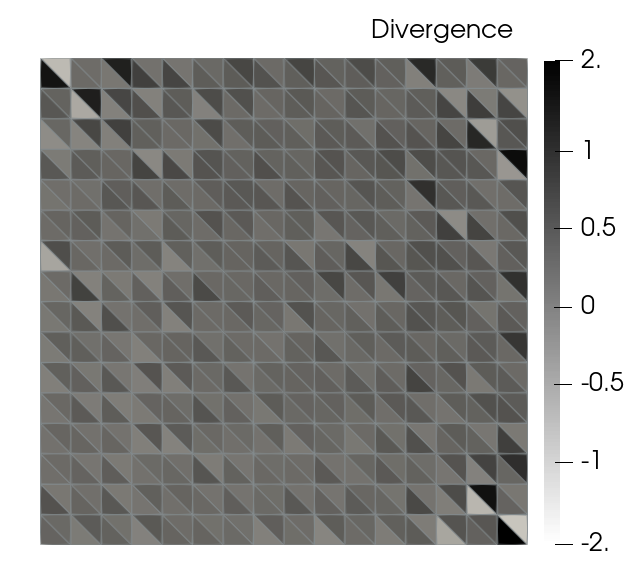}
	\caption{Error and divergence of the error for displacement after 40 cycles of two-level multigrid with Vanka relaxation, for $\nu=0.49$, applied to a problem with zero right-hand side and random initial guess. The divergence of the error illustrates neighboring element pairs with divergence of similar magnitude but opposite sign, indicating a globally-supported divergence-free null-space.} \label{fig:globalDivFree}
\end{figure}

\subsection{Reduced Quadrature}\label{sec:rq}
As recognized in \cite{Schoberl1998, Schoberl1999a, Schoberl1999b}, the non-local nature of the basis for the divergence-free spaces arises from the direct evaluation of the $(\ddiv\bm{u}, \ddiv\bm{v})$ term in the weak form, since the discrete divergence of the displacement space is not a subset of the piecewise constant pressure space.
To avoid this, we implement a reduced integration approach\cite{malkus1978mixed,2013BoffiD_BrezziF_FortinM-aa,yi2017study} and replace $(\ddiv\bm{u}, \ddiv\bm{v})$ with $(P_{Q_h} \ddiv\bm{u}, P_{Q_h}\ddiv\bm{v})$, where $P_{Q_h}$ is the $L^2$-projection from $Q$ onto $Q_h$, the space of piecewise constant functions. With this reduced integration approach, a basis for the space of divergence-free functions is readily constructed with local support, allowing local relaxation schemes to be effective for divergence-free components.

To illustrate this further, consider that the discretization for displacements has a total of $2 N_v + N_e$ degrees of freedom (DoFs),
where $N_v$ is the number of vertices in the mesh, and $N_e$ is the number of edges.
By direct computation, around each vertex in the mesh, we can introduce a local basis of three divergence-free functions, shown in \cref{fig:DivFreeBasis}, resulting in $3 N_v$ divergence-free basis functions.
The reduced quadrature approach constrains $\ddiv \bm{u}$
to be in the piecewise constant pressure space,
thus, there are $N_T - 1$ divergence-free constraints,
where $N_T$ is the number of triangular elements.
Now, subtracting the number of divergence-free constraints from the total DoFs,
$(2N_v+N_e)-(N_T-1) = 3N_v$,
we get the number of divergence-free basis functions.
Thus, the reduced quadrature approach fully supports the divergence-free functions
through the local basis functions in \cref{fig:DivFreeBasis}.
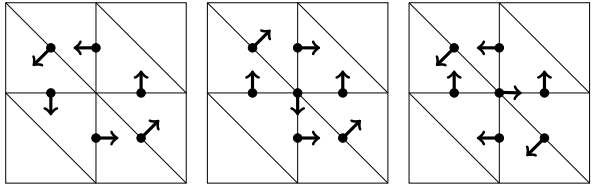
\begin{figure}[h!]
	\begin{center}
		\scalebox{0.8}{
			\begin{tikzpicture}
				\def\h{1.5}
				\node (a) at (\h*0,\h*2) {}; \node (b) at (\h*1,\h*2) {}; \node (bb) at (\h*2,\h*2) {};
				\node (c) at (\h*0,\h*1) {}; \node (d) at (\h*1,\h*1) {}; \node (e) at (\h*2,\h*1) {};
				\node (ff) at (\h*0,\h*0) {}; \node (f) at (\h*1,\h*0) {}; \node (g) at (\h*2,\h*0) {};
				\draw (a.center) -- (b.center) -- (e.center) -- (g.center) -- (f.center) -- (c.center) -- (a.center);
				\draw (a.center) -- (d.center) -- (g.center);
				\draw (c.center) -- (d.center) -- (e.center);
				\draw (b.center) -- (d.center) -- (f.center);
				\draw (c.center) -- (ff.center) -- (f.center);
				\draw (b.center) -- (bb.center) -- (e.center);
				\filldraw ($(d)+\h*(0,0.5)$)    circle(2pt);
				\draw[line width=1.5pt,->] ($(d)+\h*(0,0.5)$) -- ($(d)+\h*(0,0.5)+\h*(-0.25,0)$);
				\filldraw ($(d)+\h*(0.5,0)$)    circle(2pt);
				\draw[line width=1.5pt,->] ($(d)+\h*(0.5,0)$) -- ($(d)+\h*(0.5,0)+\h*(0,0.25)$);
				\filldraw ($(d)-\h*(0,0.5)$)    circle(2pt);
				\draw[line width=1.5pt,->] ($(d)-\h*(0,0.5)$) -- ($(d)-\h*(0,0.5)+\h*(0.25,0)$);
				\filldraw ($(d)-\h*(0.5,0)$)    circle(2pt);
				\draw[line width=1.5pt,->] ($(d)-\h*(0.5,0)$) -- ($(d)-\h*(0.5,0)+\h*(0,-0.25)$);
				\filldraw ($(d)+\h*(0.5,-0.5)$) circle(2pt);
				\draw[line width=1.5pt,->] ($(d)+\h*(0.5,-0.5)$) -- ($(d)+\h*(0.5,-0.5)+\h*(0.2,0.2)$);
				\filldraw ($(d)+\h*(-0.5,0.5)$) circle(2pt);
				\draw[line width=1.5pt,->] ($(d)+\h*(-0.5,0.5)$) -- ($(d)+\h*(-0.5,0.5)+\h*(-0.2,-0.2)$);
			\end{tikzpicture}
			\begin{tikzpicture}
				\def\h{1.5}
				\node (a) at (\h*0,\h*2) {}; \node (b) at (\h*1,\h*2) {}; \node (bb) at (\h*2,\h*2) {};
				\node (c) at (\h*0,\h*1) {}; \node (d) at (\h*1,\h*1) {}; \node (e) at (\h*2,\h*1) {};
				\node (ff) at (\h*0,\h*0) {}; \node (f) at (\h*1,\h*0) {}; \node (g) at (\h*2,\h*0) {};
				\draw (a.center) -- (b.center) -- (e.center) -- (g.center) -- (f.center) -- (c.center) -- (a.center);
				\draw (a.center) -- (d.center) -- (g.center);
				\draw (c.center) -- (d.center) -- (e.center);
				\draw (b.center) -- (d.center) -- (f.center);
				\draw (c.center) -- (ff.center) -- (f.center);
				\draw (b.center) -- (bb.center) -- (e.center);
				\filldraw ($(d)+\h*(0,0.5)$)    circle(2pt);
				\draw[line width=1.5pt,->] ($(d)+\h*(0,0.5)$) -- ($(d)+\h*(0,0.5)+\h*(0.25,0)$);
				\filldraw ($(d)+\h*(0.5,0)$)    circle(2pt);
				\draw[line width=1.5pt,->] ($(d)+\h*(0.5,0)$) -- ($(d)+\h*(0.5,0)+\h*(0,0.25)$);
				\filldraw ($(d)-\h*(0,0.5)$)    circle(2pt);
				\draw[line width=1.5pt,->] ($(d)-\h*(0,0.5)$) -- ($(d)-\h*(0,0.5)+\h*(0.25,0)$);
				\filldraw ($(d)-\h*(0.5,0)$)    circle(2pt);
				\draw[line width=1.5pt,->] ($(d)-\h*(0.5,0)$) -- ($(d)-\h*(0.5,0)+\h*(0,0.25)$);
				\filldraw ($(d)+\h*(0.5,-0.5)$) circle(2pt);
				\draw[line width=1.5pt,->] ($(d)+\h*(0.5,-0.5)$) -- ($(d)+\h*(0.5,-0.5)+\h*(0.2,0.2)$);
				\filldraw ($(d)+\h*(-0.5,0.5)$) circle(2pt);
				\draw[line width=1.5pt,->] ($(d)+\h*(-0.5,0.5)$) -- ($(d)+\h*(-0.5,0.5)+\h*(0.2,0.2)$);
				\filldraw (d) circle (2pt);
				\draw[line width=1.5pt,->] ($(d)+(0,0.01)$) -- ($(d)+\h*(0,-0.25)$);
			\end{tikzpicture}
			\begin{tikzpicture}
				\def\h{1.5}
				\node (a) at (\h*0,\h*2) {}; \node (b) at (\h*1,\h*2) {}; \node (bb) at (\h*2,\h*2) {};
				\node (c) at (\h*0,\h*1) {}; \node (d) at (\h*1,\h*1) {}; \node (e) at (\h*2,\h*1) {};
				\node (ff) at (\h*0,\h*0) {}; \node (f) at (\h*1,\h*0) {}; \node (g) at (\h*2,\h*0) {};
				\draw (a.center) -- (b.center) -- (e.center) -- (g.center) -- (f.center) -- (c.center) -- (a.center);
				\draw (a.center) -- (d.center) -- (g.center);
				\draw (c.center) -- (d.center) -- (e.center);
				\draw (b.center) -- (d.center) -- (f.center);
				\draw (c.center) -- (ff.center) -- (f.center);
				\draw (b.center) -- (bb.center) -- (e.center);
				\filldraw ($(d)+\h*(0,0.5)$)    circle(2pt);
				\draw[line width=1.5pt,->] ($(d)+\h*(0,0.5)$) -- ($(d)+\h*(0,0.5)+\h*(-0.25,0)$);
				\filldraw ($(d)+\h*(0.5,0)$)    circle(2pt);
				\draw[line width=1.5pt,->] ($(d)+\h*(0.5,0)$) -- ($(d)+\h*(0.5,0)+\h*(0,0.25)$);
				\filldraw ($(d)-\h*(0,0.5)$)    circle(2pt);
				\draw[line width=1.5pt,->] ($(d)-\h*(0,0.5)$) -- ($(d)-\h*(0,0.5)+\h*(-0.25,0)$);
				\filldraw ($(d)-\h*(0.5,0)$)    circle(2pt);
				\draw[line width=1.5pt,->] ($(d)-\h*(0.5,0)$) -- ($(d)-\h*(0.5,0)+\h*(0,0.25)$);
				\filldraw ($(d)+\h*(0.5,-0.5)$) circle(2pt);
				\draw[line width=1.5pt,->] ($(d)+\h*(0.5,-0.5)$) -- ($(d)+\h*(0.5,-0.5)+\h*(-0.2,-0.2)$);
				\filldraw ($(d)+\h*(-0.5,0.5)$) circle(2pt);
				\draw[line width=1.5pt,->] ($(d)+\h*(-0.5,0.5)$) -- ($(d)+\h*(-0.5,0.5)+\h*(-0.2,-0.2)$);
				\filldraw (d) circle (2pt);
				\draw[line width=1.5pt,->] ($(d)+(0,0.01)$) -- ($(d)+\h*(0.25,0)$);
			\end{tikzpicture}
		}
	\end{center}
	\caption{The local divergence-free bases supported by the discretization.} \label{fig:DivFreeBasis}
\end{figure}

Therefore, we define the bilinear form for the reduced quadrature discretization as
\begin{equation*}
	a^{\text{RQ}}(\bm{u},\bm{v}) := 2\mu \left ({ \varepsilon}(\bm{u}),{ \varepsilon}(\bm{v})\right ) + \lambda(P_{Q_h} \ddiv\bm{u}, P_{Q_h}\ddiv\bm{v}).
\end{equation*}
Using this, the poroelastic system is then written as
\begin{equation} \label{block_form_rq}
	\mathcal{A}^{\text{RQ}} = \left(
	\begin{array}{ccc}
		A_{\bm{u}}^{\text{RQ}} &0                      & \alpha B_{\bm{u}}^{\top} \\
		0                      & \tau M_{\bm w}       & \tau B_{\bm w}^{\top} \\
		\alpha B_{\bm u}      & \tau B_{\bm w}       & -\frac{1}{M} M_p
	\end{array}
	\right),
\end{equation}
where $a^{\text{RQ}}(\bm{u}_h,\bm{v}_h)\rightarrow A_{\bm{u}}^{\text{RQ}}$.
We next show that this reduced quadrature approach remains
well-posed independent of the physical and discretization parameters.
To do this, we first introduce the following lemma concerning the Stokes inf-sup condition:

\begin{lemma} \label{lem:stokesRQ}
	Let the pair of finite-element spaces $\bm{V}_h \times Q_h$ be Stokes-stable, i.e., satisfy the inf-sup condition~\cite{GR1986},
	\begin{equation*}
		\sup_{\bm{v}\in\bm{V}_h} \frac{(\ddiv\bm{v},p)}{\|\bm{v}\|_1} \geq \gamma_B^0 \|p\|, \quad \forall \ p\in Q_h,
	\end{equation*}
	where $\gamma^0_B > 0$ is a constant that does not depend on mesh size.  Then, for any $p\in Q_h$
	\begin{equation}\label{eqn:stokesA-inf-sup-RQ}
		\sup_{\bm{v}\in\bm{V_h}} \frac{( \ddiv \bm{v},p)}{\|\bm{v}\|_{A_{\bm{u}}^{\rm{RQ}}}}
		\geq \frac{\gamma_B^0}{\sqrt{d}\zeta}\|p\| =: \frac{\gamma_B}{\zeta}\|p\|,
	\end{equation}
	where $\| \bm{v} \|_{A^{\rm{RQ}}_{\bm{u}}}^2 := a^{\rm{RQ}}(\bm{v},\bm{v})$, $d$ is the dimension,
	and $\zeta := \sqrt{\lambda + 2\mu/d}$.
\end{lemma}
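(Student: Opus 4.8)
The plan is to reduce the claim entirely to a comparison between the reduced-quadrature energy norm $\|\cdot\|_{A_{\bm{u}}^{\rm{RQ}}}$ and the $H^1$ norm $\|\cdot\|_1$, after which \cref{eqn:stokesA-inf-sup-RQ} follows immediately from the assumed Stokes stability. The crucial observation is that the numerator $(\ddiv\bm{v},p)$ in \cref{eqn:stokesA-inf-sup-RQ} is \emph{identical} to the one in the hypothesized Stokes inf-sup condition; the reduced-quadrature modification only alters the norm in the denominator. Consequently, no manipulation of the bilinear form appearing in the numerator is needed, and the whole argument rests on establishing the single estimate
\[
  \|\bm{v}\|_{A_{\bm{u}}^{\rm{RQ}}} \le \sqrt{d}\,\zeta\,\|\bm{v}\|_1, \qquad \forall\, \bm{v}\in\bm{V}_h .
\]

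To prove this norm bound, I would estimate the two contributions to $a^{\rm{RQ}}(\bm{v},\bm{v}) = 2\mu\|\varepsilon(\bm{v})\|^2 + \lambda\|P_{Q_h}\ddiv\bm{v}\|^2$ separately against the $H^1$-seminorm $|\bm{v}|_1 = \|\nabla\bm{v}\|$. For the symmetric-gradient term, writing $\nabla\bm{v}$ as the sum of its symmetric part $\varepsilon(\bm{v})$ and its skew-symmetric part, which are orthogonal in the Frobenius inner product, gives at once $\|\varepsilon(\bm{v})\| \le \|\nabla\bm{v}\|$. For the divergence term, the $L^2$-projection $P_{Q_h}$ is norm-nonincreasing, so $\|P_{Q_h}\ddiv\bm{v}\| \le \|\ddiv\bm{v}\|$, while a pointwise Cauchy--Schwarz inequality applied to $\ddiv\bm{v} = \sum_{i}\partial_i v_i$ yields $\|\ddiv\bm{v}\|^2 \le d\,\|\nabla\bm{v}\|^2$. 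Combining these two estimates produces
\[
  a^{\rm{RQ}}(\bm{v},\bm{v}) \le \left(2\mu + d\lambda\right)|\bm{v}|_1^2 = d\,\zeta^2\,|\bm{v}|_1^2 \le d\,\zeta^2\,\|\bm{v}\|_1^2 ,
\]
which is exactly the desired bound upon taking square roots.

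With the norm comparison in hand, the conclusion follows in a single step. Restricting the supremum to those $\bm{v}$ for which $(\ddiv\bm{v},p)>0$ (permissible, since replacing $\bm{v}$ by $-\bm{v}$ flips the sign of the numerator without changing either norm), the upper bound on the denominator gives $(\ddiv\bm{v},p)/\|\bm{v}\|_{A_{\bm{u}}^{\rm{RQ}}} \ge (\ddiv\bm{v},p)/(\sqrt{d}\,\zeta\,\|\bm{v}\|_1)$, and taking the supremum over $\bm{v}\in\bm{V}_h$ followed by an application of the Stokes inf-sup hypothesis yields the factor $\gamma_B^0/(\sqrt{d}\,\zeta)$ claimed in \cref{eqn:stokesA-inf-sup-RQ}.

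I expect the only point genuinely requiring care to be pinning down the constant $\sqrt{d}\,\zeta$. The dimension factor enters solely through the Cauchy--Schwarz estimate on the divergence, and one must combine the elasticity and divergence contributions precisely as $2\mu + d\lambda = d\,\zeta^2$ in order to recover the stated value $\zeta = \sqrt{\lambda + 2\mu/d}$; the remaining manipulations are routine.
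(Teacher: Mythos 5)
Your proposal is correct and follows essentially the same route as the paper: both reduce \cref{eqn:stokesA-inf-sup-RQ} to the Stokes inf-sup hypothesis via the norm comparison $\|\bm{v}\|_{A_{\bm{u}}^{\rm{RQ}}} \leq \sqrt{d}\,\zeta\,\|\bm{v}\|_1$, established from the contraction property $\|P_{Q_h}\ddiv\bm{v}\|\leq\|\ddiv\bm{v}\|$ together with the Cauchy--Schwarz/Young bound $\|\ddiv\bm{v}\|^2 \leq d\,\|\cdot\|^2$ and the combination $2\mu + d\lambda = d\zeta^2$. The only cosmetic difference is that the paper inserts the intermediate exact-quadrature norm, proving $\|\bm{v}\|_{A_{\bm{u}}^{\rm{RQ}}}\leq\|\bm{v}\|_{A_{\bm{u}}}\leq\sqrt{d}\,\zeta\,\|\bm{v}\|_1$ with the divergence bounded by $\|\varepsilon(\bm{v})\|$, whereas you bound each term of $a^{\rm{RQ}}$ directly against $|\bm{v}|_1$; both give the identical constant.
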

\begin{proof}
	Using the properties of projection operators, we have that$ \| P_{Q_h} \ddiv\bm{v} \| \leq \| \ddiv\bm{v} \|$ for all $\bm{v}\in\bm{V}_h$.  This, along with the definitions of $ A_{\bm u}^{\text{RQ}} $ and $A_{\bm u}$, yields
	\begin{equation}\label{eqn:RQ_leq_A}
		\| \bm{v} \|_{A_{\bm u}^{\text{RQ}}} \leq \| \bm{v} \|_{A_{\bm u}} \text{ for all }\bm{v}\in\bm{V}_h.
	\end{equation}
	
	Next, by direct computation and applying Young's inequality, we have that $(\ddiv \bm{v}, \ddiv \bm{v}) \leq d (\varepsilon(\bm{v}),\varepsilon(\bm{v}))$.
	This implies that $a(\bm{v},\bm{v}) \leq (2\mu + d\lambda)(\varepsilon(\bm{v}),\varepsilon(\bm{v}))$,
	and, through another application of Young's inequality,
	we have $\|\bm{v}\|_{A_{\bm{u}}} \leq \sqrt{d}\zeta\|\bm{v}\|_1$, with
	$a(\bm{v},\bm{v}) =: \|\bm{v}\|^2_{A_{\bm{u}}}$.
	Then, for any $p\in Q_h$,
	\begin{equation}\label{eqn:stokesA-inf-sup-v2}
		\sup_{\bm{v}\in\bm{V_h}} \frac{(B_{\bm{u}}\bm{v},p)}{\|\bm{v}\|_{A_{\bm{u}}}}
		\geq \frac{\gamma_B^0}{\sqrt{d}\zeta}\|p\| =: \frac{\gamma_B}{\zeta}\|p\|.
	\end{equation}
	Thus, \cref{eqn:stokesA-inf-sup-v2} and \cref{eqn:RQ_leq_A} give \cref{eqn:stokesA-inf-sup-RQ}.
\end{proof}

Note that, since the norm $\|\cdot\|_{A^{\rm{RQ}}_{\bm{u}}}$ is parameter-dependent,
in the large $\lambda$ limit, both sides of \cref{eqn:stokesA-inf-sup-RQ} behave as $1/\sqrt{\lambda}$.
We now show that the reduced-quadrature discretization is well-posed,
using the fact that the bubble-enriched P1-RT0-P0 discretization is
\textit{Stokes-Biot stable} (see Definition 3.1 in \cite{rodrigo2018new}).

\begin{theorem}\label{thm:rq_wellposed}
	Let $\bm{X}_h = (\bm{V}_h, \bm{W}_h, Q_h)$ be Stokes-Biot stable, that is,
	\begin{itemize}
		\item $\exists \ C_{\bm{V}}>0$ such that  $a(\bm{u}, \bm{v})  \leq C_{\bm{V}} \| \bm{u} \|_1 \| \bm{v} \|_1$, for all $\bm{u}, \bm{v} \in {\bm{V}}_h$;
		\item $\exists \ \alpha_{\bm{V}} > 0$ such that $a(\bm{u}, \bm{u})  \geq \alpha_{\bm{V}} \| \bm{u} \|_1^2$, for all $\bm{u}\in {\bm{V}}_h$;
		\item $({\bm{W}}_h,{Q}_h)$ is
		Poisson stable, satisfying the necessary stability and continuity
		conditions for the mixed formulation of Poisson's
		equation; and
		\item The pair of spaces $({\bm{V}}_h,{Q}_h)$ is Stokes stable.
	\end{itemize}
	For $\bm{x} = (\bm{u},\bm{w},p)\in\bm{X}_h$ and $\bm{y} = (\bm{v},\bm{w},p)\in\bm{X}_h$, define
	\begin{align}
		\mathcal{B}(\bm{x},\bm{y}) =&
		a^{\rm{RQ}}(\bm{u},\bm{v}) - \left(\alpha p, \ddiv \bm{v}\right)
		+ \tau({\bm K}^{-1}\mu_f\bm{w},\bm{r}) - \tau(p,\ddiv \bm{r}) \label{eqn:bilinearform}\\
		&- \tau(\ddiv \bm{w},q) - \left(\frac{1}{M} p ,q \right) - \left(\alpha \ddiv \bm{u}, q\right), \notag \\
		\| \bm{x} \|_{\mathcal{D}^{\rm{RQ}}}^2 = &
		\| \bm{u} \|_{A_{\bm u}^{\rm{RQ}}}^2 +
		c_p^{-1} \| p \|^2 +
		\tau \| \bm{w} \|_{M_{\bm w}}^2 + \tau^2 c_p \| \ddiv \bm{w} \|^2,\label{eqn:weighted_norm_matrix}
	\end{align}
	where $\|\bm{w}\|_{M_{\bm w}}^2 := (\bm{K}^{-1}\mu_f \bm{w}, \bm{w})$, and $c_p = \left(\frac{\alpha^2}{\zeta^2} + \frac{1}{M}\right)^{-1}$.  Then
	\begin{align}
		& \sup_{\bm{0} \neq \bm{x} \in \bm{X}_h} \sup_{\bm{0} \neq \bm{y} \in \bm{X}_h }
		\frac{ \mathcal{B}( \bm{x}, \bm{y})}{ \|\bm{x}\|_{\mathcal{D}^{\rm{RQ}}} \|\bm{y}\|_{\mathcal{D}^{\rm{RQ}}}}
		\leq \tilde{\varsigma}, \label{supsup-AD-rq} \\
		& \Inf_{\bm{0} \neq \bm{y} \in \bm{X}_h} \sup_{\bm{0} \neq \bm{x} \in \bm{X}_h}
		\frac{ \mathcal{B}( \bm{x}, \bm{y})}{ \|\bm{x}\|_{\mathcal{D}^{\rm{RQ}}} \|\bm{y}\|_{\mathcal{D}^{\rm{RQ}}} }
		\geq \tilde{\gamma}, \label{infsup-AD-rq}
	\end{align}
	where the constants $\tilde{\varsigma}$ and $\tilde{\gamma}$ are independent
	of the physical and discretization parameters.
\end{theorem}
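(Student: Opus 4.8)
The plan is to establish both bounds within the standard inf-sup framework for well-posedness: \cref{supsup-AD-rq} is a continuity estimate for $\mathcal{B}$ in the norm $\|\cdot\|_{\mathcal{D}^{\rm{RQ}}}$, while \cref{infsup-AD-rq} is proved by exhibiting, for each fixed $\bm{y}=(\bm{v},\bm{r},q)\in\bm{X}_h$, a test function $\bm{x}=(\bm{u},\bm{w},p)\in\bm{X}_h$ for which $\mathcal{B}(\bm{x},\bm{y})\gtrsim\|\bm{y}\|_{\mathcal{D}^{\rm{RQ}}}^2$ while $\|\bm{x}\|_{\mathcal{D}^{\rm{RQ}}}\lesssim\|\bm{y}\|_{\mathcal{D}^{\rm{RQ}}}$, since then the supremum over $\bm{x}$ in \cref{infsup-AD-rq} is bounded below by $\mathcal{B}(\bm{x},\bm{y})/\|\bm{x}\|_{\mathcal{D}^{\rm{RQ}}}$. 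The argument follows the Stokes--Biot stability proof of \cite{rodrigo2018new}, with $a$ replaced by $a^{\rm{RQ}}$ and with the $\zeta$-scaled Stokes inf-sup of \cref{lem:stokesRQ} supplying the correct parameter dependence.

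For the continuity bound \cref{supsup-AD-rq}, I would bound each of the seven terms of $\mathcal{B}$ in \cref{eqn:bilinearform} by Cauchy--Schwarz against the matching components of $\|\cdot\|_{\mathcal{D}^{\rm{RQ}}}$ in \cref{eqn:weighted_norm_matrix}. The diagonal terms $a^{\rm{RQ}}(\bm{u},\bm{v})$, $\tau(\bm{K}^{-1}\mu_f\bm{w},\bm{r})$, and $\tfrac{1}{M}(p,q)$ are immediate, using $\tfrac{1}{M}\le c_p^{-1}$ for the last. For the displacement--pressure couplings $\alpha(p,\ddiv\bm{v})$ and $\alpha(\ddiv\bm{u},q)$ I would write $(p,\ddiv\bm{v})=(p,P_{Q_h}\ddiv\bm{v})$ since $p\in Q_h$, and combine the estimate $\|P_{Q_h}\ddiv\bm{v}\|\le\zeta^{-1}\|\bm{v}\|_{A_{\bm{u}}^{\rm{RQ}}}$ (which follows from $\zeta^2=\lambda+2\mu/d$ and the bound $\|\ddiv\bm{v}\|^2\le d\|\varepsilon(\bm{v})\|^2$ in the proof of \cref{lem:stokesRQ}) with $\alpha/\zeta\le c_p^{-1/2}$, so each coupling is controlled by the $A_{\bm{u}}^{\rm{RQ}}$- and $c_p^{-1/2}$-weighted pressure components. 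The Darcy couplings $\tau(p,\ddiv\bm{r})$ and $\tau(\ddiv\bm{w},q)$ split as $(c_p^{-1/2}\|p\|)(\tau c_p^{1/2}\|\ddiv\bm{r}\|)$ and its analog, matching the $\tau^2 c_p\|\ddiv\bm{w}\|^2$ term. Summing yields a parameter-independent $\tilde{\varsigma}$.

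The heart of the argument is \cref{infsup-AD-rq}, for which I would construct the test function in three pieces. The diagonal choice $\bm{x}_0=(\bm{v},\bm{r},-q)$ gives, after the displacement--pressure and Darcy--pressure coupling terms cancel, $\mathcal{B}(\bm{x}_0,\bm{y})=\|\bm{v}\|_{A_{\bm{u}}^{\rm{RQ}}}^2+\tau\|\bm{r}\|_{M_{\bm{w}}}^2+\tfrac{1}{M}\|q\|^2$, where the sign flip on $q$ turns the indefinite $(3,3)$ block positive. This controls everything in $\|\bm{y}\|_{\mathcal{D}^{\rm{RQ}}}^2$ except the $\tfrac{\alpha^2}{\zeta^2}\|q\|^2$ share of $c_p^{-1}\|q\|^2$ and the $\tau^2 c_p\|\ddiv\bm{r}\|^2$ term. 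To recover the pressure share I would invoke \cref{lem:stokesRQ}, choosing $\bm{v}_q\in\bm{V}_h$ with $\|\bm{v}_q\|_{A_{\bm{u}}^{\rm{RQ}}}=\|q\|$ and $(\ddiv\bm{v}_q,q)\ge\tfrac{\gamma_B}{\zeta}\|q\|^2$; the correction $\bm{u}\leftarrow\bm{v}-\delta_1\bm{v}_q$ then produces $\alpha\delta_1(\ddiv\bm{v}_q,q)\ge\alpha\delta_1\tfrac{\gamma_B}{\zeta}\|q\|^2$ from the $-\alpha(\ddiv\bm{u},q)$ entry. To recover the divergence share, since $\ddiv\bm{r}\in Q_h$ for the RT0--P0 pair, I would take $p\leftarrow-q-\delta_2\ddiv\bm{r}$, so that $-\tau(p,\ddiv\bm{r})$ contributes $\tau\delta_2\|\ddiv\bm{r}\|^2$; the scalings $\delta_1\sim\alpha/(\zeta\gamma_B)$ and $\delta_2\sim\tau c_p$ make these positive terms proportional to the two missing norm contributions.

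The main obstacle is controlling the cross terms the two corrections generate while keeping every constant independent of $\lambda$, $M$, $\tau$, and $\bm{K}$. The correction $-\delta_1\bm{v}_q$ creates $-\delta_1 a^{\rm{RQ}}(\bm{v}_q,\bm{v})$, and $-\delta_2\ddiv\bm{r}$ creates $\alpha\delta_2(\ddiv\bm{r},\ddiv\bm{v})$ and $\tfrac{1}{M}\delta_2(\ddiv\bm{r},q)$; I would bound each with Young's inequality, again using $(\ddiv\bm{r},\ddiv\bm{v})=(\ddiv\bm{r},P_{Q_h}\ddiv\bm{v})$ and $\|P_{Q_h}\ddiv\bm{v}\|\le\zeta^{-1}\|\bm{v}\|_{A_{\bm{u}}^{\rm{RQ}}}$, so the resulting $\|\bm{v}\|_{A_{\bm{u}}^{\rm{RQ}}}^2$, $\|q\|^2$, and $\|\ddiv\bm{r}\|^2$ contributions are absorbed into fractions of the diagonal terms. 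The crucial point, which is precisely what the choice $c_p=(\alpha^2/\zeta^2+1/M)^{-1}$ together with the $\zeta$-scaling in \cref{lem:stokesRQ} is engineered to deliver, is that every coefficient arising in these Young estimates (for instance $\alpha c_p/\zeta$ and $\alpha^2 c_p/\zeta^2\le1$) is bounded by a fixed constant, so $\delta_1,\delta_2$ can be fixed small enough to yield a parameter-independent $\tilde{\gamma}$. Finally, since the $\bm{w}$-component of $\bm{x}$ equals $\bm{r}$, the bound $\|\bm{x}\|_{\mathcal{D}^{\rm{RQ}}}\lesssim\|\bm{y}\|_{\mathcal{D}^{\rm{RQ}}}$ reduces to checking $\delta_1\|\bm{v}_q\|_{A_{\bm{u}}^{\rm{RQ}}}=\delta_1\|q\|\lesssim c_p^{-1/2}\|q\|$ and $c_p^{-1}\|\delta_2\ddiv\bm{r}\|^2=\tau^2 c_p\|\ddiv\bm{r}\|^2$, both of which hold by the same scalings.
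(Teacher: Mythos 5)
Your proposal is correct and follows essentially the same route as the paper's proof: the same construction (a sign-flipped diagonal choice plus a displacement correction from \cref{lem:stokesRQ} and a $\tau\,\ddiv$-correction to the pressure), the same key estimate $\|P_{Q_h}\ddiv\bm{v}\|\le\zeta^{-1}\|\bm{v}\|_{A_{\bm{u}}^{\rm{RQ}}}$, and the same Young-inequality absorption, with the roles of $\bm{x}$ and $\bm{y}$ swapped, which is immaterial since $\mathcal{B}$ is symmetric. The only quibble is your suggested scaling $\delta_1\sim\alpha/(\zeta\gamma_B)$: taken literally the Young cost $\tfrac{\delta_1^2}{2}\|q\|^2$ would overwhelm the gain $\delta_1\alpha\gamma_B\zeta^{-1}\|q\|^2$ for small $\gamma_B$, so one should instead take $\delta_1\sim\alpha\gamma_B/\zeta$ (as the paper does with $\psi_1=\alpha\gamma_B/(2\zeta)$), recovering only a $\gamma_B^2$-fraction of the missing pressure share — which suffices, since $\gamma_B$ is parameter-independent, and is consistent with your own ``fixed small enough'' hedge.
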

\begin{proof}
	Using \cref{lem:stokesRQ}, we know that
	for a given $p \in Q_h$, there exists $\bm{z} \in \bm{V}_h$, such that
	$(p, \ddiv \bm{z}) \geq \frac{\gamma_B}{\zeta} \| p \|^2$
	and $\| \bm{z} \|_{A_{\bm{u}}^{\text{RQ}}} = \| p \|$.
	Let $\bm{v} = \bm{u} - \psi_1 \bm{z}$, $\bm{r} = \bm{w}$, and
	$q = -p - \psi_2 \tau \ddiv \bm{w}$ for constants $\psi_1$ and $\psi_2$ that will be specified later.  Then, by the Cauchy-Schwarz and Young's inequality,
	\begin{align*}
		\mathcal{B}(\bm{x}, \bm{y})
		= & \| \bm{u} \|_{A_{\bm{u}}^{\text{RQ}}}^2 - \psi_1 a^{\text{RQ}}( \bm{u}, \bm{z})
		+ \psi_1 \alpha(p,  \ddiv \bm{z}) + \tau \| \bm{w} \|^2_{M_{\bm w}}
		+ \frac{1}{M} \| p \|^2 \\ & + \psi_2 \tau \frac{1}{M}(p,  \ddiv \bm{w})
		+ \psi_2 \alpha \tau ( P_{Q_h} \ddiv \bm{u},\ddiv \bm{w})
		+ \psi_2 \tau^2 \|  \ddiv \bm{w} \|^2\\
		\geq & \| \bm{u} \|_{A_{\bm{u}}^{\text{RQ}}}^2 - \frac{1}{2} \| \bm{u} \|_{A_{\bm{u}}^{\text{RQ}}}^2
		- \frac{\psi_1^2}{2} \| \bm{z}\|_{A_{\bm{u}}^{\text{RQ}}}^2
		+ \psi_1 \frac{\alpha \gamma_B}{\zeta} \| p \|^2
		+ \tau \| \bm{w} \|^2_{M_{\bm w}} + \frac{1}{M} \| p \|^2 \\
		& - \frac{3 \psi_2}{2} \frac{1}{M^2} \| p \|^2
		- \frac{\psi_2}{6} \tau^2 \| \ddiv \bm{w} \|^2
		- \frac{\psi_2}{2} \alpha^2 \| P_{Q_h} \ddiv \bm{u} \|^2
		- \frac{\psi_2}{2} \tau^2\| \ddiv \bm{w} \|^2\\
		& + \psi_2 \tau^2 \| \ddiv \bm{w} \|^2.
	\end{align*}
	As in the proof of \Cref{lem:stokesRQ},
	\[
	\frac{1}{d}(P_{Q_h}\ddiv\bm{u},P_{Q_h}\ddiv\bm{u}) \leq
	\frac{1}{d}(\ddiv\bm{u},\ddiv\bm{u}) \leq
	(\epsilon(\bm{u}),\epsilon(\bm{u})).\]
	Then, by direct calculation and the definition of $A_{\bm u}^{\text{RQ}}$, we have
	\begin{equation}\label{eqn:B_less_Arq}
		\| P_{Q_h} \ddiv \bm{u} \| \leq \frac{1}{\zeta} \| \bm{u} \|_{A_{\bm u}^{\text{RQ}}}.
	\end{equation}
	Combining terms and applying \cref{eqn:B_less_Arq} gives
	\begin{align*}
		\mathcal{B}(\bm{x}, \bm{y})
		\geq & \left( \frac{1}{2} - \frac{\psi_2}{2 }\frac{\alpha^2}{\zeta^2} \right) \| \bm{u} \|_{A_{\bm{u}}^{\text{RQ}}}^2
		+ \tau \| \bm{w} \|^2_{M_{\bm w}} + \frac{1}{3} \psi_2 \tau^2 \|  \ddiv \bm{w} \|^2  \\
		& + \left( \psi_1 \frac{\alpha \gamma_B}{\zeta} - \frac{\psi^2_1}{2} \right) \| p \|^2
		+ \left( 1 - \frac{3}{4} \frac{2\psi_2}{M} \right) \frac{1}{M} \| p \|^2.
	\end{align*}
	Choosing $\psi_1 = \frac{\alpha \gamma_B}{2 \zeta}$
	and $\psi_2 =  \frac{1}{2}  \left( \frac{\alpha^2}{\zeta^2} + \frac{1}{M} \right)^{-1}$ then gives
	\begin{align*}
		\mathcal{B}(\bm{x}, \bm{y})
		\geq & \left(\frac{1}{2}-\frac{1}{4}\right) \| \bm{u} \|_{A_{\bm{u}}^{\text{RQ}}}^2 + \tau \| \bm{w} \|^2_{M_{\bm w}}
		+ \frac{1}{6} \tau^2 \left( \frac{\alpha^2}{\zeta^2}
		+ \frac{1}{M} \right)^{-1} \|  \ddiv \bm{w} \|^2 \\
		& + \left( \frac{3\alpha^2\gamma_B^2}{8\zeta^2} \right) \| p \|^2
		+ \left( 1 - \frac{3}{4} \right) \frac{1}{M} \| p \|^2 \\
		\geq & \bar{\gamma} \| \left( \bm{u},  \bm{w}, p\right) \|_{\mathcal{D}^{\text{RQ}}}^2,
	\end{align*}
	where $\bar{\gamma} = \min \left\{  \frac{1}{6}, \frac{3\gamma_B^2}{8}  \right \}$.
	Then, by the triangle inequality,
	\begin{align*}
		\|\bm{y}\|_{\mathcal{D}^{\text{RQ}}}^2 &=
		\| \bm{v} \|_{A_{\bm u}^{\text{RQ}}}^2 +
		\left(\frac{\alpha^2}{\zeta^2} + \frac{1}{M}\right) \| q \|^2 +
		\tau \| \bm{r} \|_{M_{\bm w}}^2 + \tau^2 c_p \| \ddiv \bm{r} \|^2 \leq (\gamma^*)^2 \| \bm{x} \|_{\mathcal{D}^{\text{RQ}}}^2,
	\end{align*}
	where $(\gamma^*)^2 = \max\left\{ 2, \frac{\gamma_B^2}{4} \right\}$.
	Thus, the bilinear form $\mathcal{B}(\cdot,\cdot)$ defined in \cref{eqn:bilinearform} satisfies
	\cref{infsup-AD-rq} with $\tilde{\gamma} = \gamma^* / \bar{\gamma}$.
	For the upper bound, \cref{supsup-AD-rq}, using Cauchy-Schwarz and \cref{eqn:B_less_Arq}, we have
	$\mathcal{B}(\bm{x},\bm{y})
	\leq 8\|\bm{x}\|_{\mathcal{D}^{\text{RQ}}} \|\bm{y}\|_{\mathcal{D}^{\text{RQ}}}$, which completes the proof.
\end{proof}

\begin{remark}
	To better understand the choice of the weighted norm \cref{eqn:weighted_norm_matrix}, consider two limiting cases.  When $\lambda \rightarrow \infty$, $\mathcal{B}(\bm{x},\bm{y})$ is dominated by\newline $\lambda (P_{Q_h} \ddiv \bm{u}, P_{Q_h} \ddiv \bm{v})$, which corresponds to the dominating term $\lambda \| P_{Q_h} \ddiv \bm{u} \|^2$ in the weighted norm. When $\tau \rightarrow 0$,  $\mathcal{B}(\bm{x}, \bm{y})$ reduces to $a^{\text{RQ}}(\bm{u,\bm{v}}) - (\alpha p, \ddiv \bm{v}) - (\alpha \ddiv \bm{u}, q) - \frac{1}{M}(p,q)$, which is a Stokes-like problem. The weighted norm \cref{eqn:weighted_norm_matrix}, in this case, reduces to $\|\bm{u}\|^2_{A_{\bm{u}}^{\text{RQ}}} + c_p^{-1} \|p\|^2$, which is a proper choice for Stokes-type problems.  Thus, the weighted norm \cref{eqn:weighted_norm_matrix} is a proper choice in those limiting cases.
\end{remark}

	\begin{remark}
		In~\cite{mardal2021accurate}, the \emph{minimal Stokes-Biot stability condition} was proposed, under which a wider class of discretizations can be shown to be parameter-robust for solving the three-field formulation~\eqref{Biot-eq1}-\eqref{Biot-eq3}.  That result also applies to the reduced-quadrature discretization presented here, and the conclusions of \cref{thm:rq_wellposed} still hold if we assume $\bm{X}_h = (\bm{V}_h, \bm{W}_h, Q_h)$ to be \emph{minimal Stokes-Biot stable}, i.e., replacing the condition that $(\bm{W}_h, Q_h)$ is Poisson stable by $\ddiv \bm{W}_h \subset Q_h$.  In fact, the proof of \cref{thm:rq_wellposed} uses only the minimal Stokes-Biot stability condition.  This means that the reduced-quadrature technique can be applied to other discretizations that are minimal Stokes-Biot stable but not Stokes-Biot stable, e.g., the bubble-enriched P1-P1-P0 and P2-P1-P0 discretizations. We refer to~\cite{mardal2021accurate} for further discussion of spaces that satisfy the minimal Stokes-Biot stability condition. 
	\end{remark}

\begin{remark}
	In~\cite{yi2017study}, it has been shown that the reduced-quadrature discretization is well-posed independent of the discretization parameters by using the traditional Brezzi theory for saddle-point systems~\cite{brezzi1974existence}.  Here, with the help of Stokes-Biot stability and properly chosen weighted norm, we show that the reduced-quadrature discretization is well-posed independent of the physical parameters as well.  This implies that the reduced quadrature approach is parameter-robust and also does not destroy the approximation properties of the bubble-enriched P1-RT0-P0 discretization~\cite{rodrigo2018new}.
\end{remark}

\section{Monolithic Multigrid}\label{sec:mmg}
Preconditioners for coupled systems, such as the reduced quadrature discretization in \cref{block_form_rq}, generally fall into two classes, those based on block-factorization approaches and those based on monolithic multigrid.  The block-factorization approach was considered for the discretization from \cite{rodrigo2018new} in \cite{adler2019robust}; here, we focus on monolithic multigrid, extending recent studies in~\cite{NLA2147, HMFEMStokes, NLA:NLA2074}.  The defining feature of monolithic multigrid is the use of coupled relaxation schemes that are crafted to address the block structure of the system, along with a coarse-grid correction procedure that, again, couples the blocks within the system.  Here, we consider geometric multigrid~\cite{MR1807961}, combining coarse-grid correction based on geometric interpolation operators (modified, as discussed below, to account for divergence-free functions) with relaxation that aims to damp oscillatory error components on each grid level.  We write the two-grid error propagation operator as
\begin{equation}\label{TG-Operator-Biot}
	E_{TG}= E_s^{\nu_2}E_{CGC} E_s^{\nu_1},
\end{equation}
where $\nu_1$ and $\nu_2$ are the number of pre- and post-relaxation iterations, respectively. The error-propagation operator for relaxation is $E_s= I -\omega\mathcal{M}^{-1}\mathcal{A}$, where $\omega$ is a damping parameter,
and $E_{CGC}=I-P {\mathcal{A}}_H^{-1}R \mathcal{A}$ for the coarse-grid correction (CGC)
where $P$ is the multigrid interpolation operator and $R$ is the restriction operator. The coarse-grid operator, $\mathcal{A}_H$, is constructed by either rediscretization or as the Galerkin operator, $R\mathcal{A}P$.  As is typical for monolithic multigrid, the interpolation operator is determined block-wise, given as
\begin{equation}\label{eq:block_interp}
	P = \begin{pmatrix}
		P_{\bm u} & 0 & 0 \\
		0 & P_{\bm w} & 0 \\
		0 & 0 & P_p \end{pmatrix},
\end{equation}
where $P_{\bm u}$ is the interpolation operator for displacements, $P_{\bm w}$ is that for the Darcy velocity, and $P_p$ is the interpolation operator for pressure.  We discuss the construction of $P_{\bm u}$ below; for $P_{\bm w}$ and $P_p$, we use the canonical finite-element interpolation operators for RT0 and P0.  We fix $R = P^T$.  While the Galerkin and rediscretization coarse-grid operators coincide when the canonical finite-element operators are used for all fields, they will not do so here, due to the use of the divergence-preserving interpolation for $P_{\bm u}$ discussed below.  Following the geometric multigrid structure, we use the rediscretization operators instead of Galerkin, primarily because this allows easy extension from effective two-level solvers to the multilevel case.

To simplify the notation, we rewrite
\begin{equation}\label{Saddle-point-system}
	\mathcal{A}^{\text{RQ}} \bm{x} =
	\begin{pmatrix}
		A   &B^{\top}\\
		B & -C
	\end{pmatrix}
	\begin{pmatrix} \bm{y}  \\ p\end{pmatrix},
\end{equation}
where
\begin{equation*}
	A=
	\begin{pmatrix}
		A^{\text{RQ}}_{\bm{u}}       &0  \\
		0            &\tau  M_{\bm{\omega}}
	\end{pmatrix},\quad
	B =
	\begin{pmatrix}
		\alpha B_{\bm u}       &\tau B_{\boldsymbol{\omega}}
	\end{pmatrix},\quad
	C = \frac{1}{M} M_p, \text{ and } \bm{y} = \begin{pmatrix} \bm{u} \\ \bm{w} \end{pmatrix}.
\end{equation*}
Next, we detail the non-standard aspects of our multigrid method, namely the\newline divergence-preserving interpolation operator and the coupled relaxation schemes.

\subsection{Divergence-Preserving Interpolation}\label{sec:Pdiv}
As recognized in \cite{Schoberl1998, Schoberl1999a, Schoberl1999b} (see also \cite{farrell2020robust}), a key to achieving solvers for elasticity that are robust in the incompressible (large $\lambda$) limit is the interpolation of divergence-free functions on the coarse mesh to divergence-free functions on the fine mesh.
If $\bm{u}_H$ is a coarse-grid divergence-free function, then, by the divergence theorem,
\begin{equation*}\label{eqn:divfreecoarse}
	\int_{\partial T} \bm{n}^{\top} \bm{u}_H \;d\text{s} = 0, \quad \forall T\in\mathcal{T}_H,
\end{equation*}
where the subscript $H$ denotes the coarse grid whose elements form the set $\mathcal{T}_H$.
Asking that the prolongation of $\bm{u}_H$
to the fine grid also be divergence-free yields,
\begin{equation}\label{eqn:divfreefine}
	\int_{\partial T} \bm{n}^{\top} (P_{\bm u}\bm{u}_H) \;d\text{s} = 0, \quad \forall T\in\mathcal{T}_h,
\end{equation}
where we now impose the condition on the fine-mesh elements in $\mathcal{T}_h$.

The standard finite-element interpolation operator on the displacement space does not satisfy this condition.  To build an operator that does, we consider the interpolation locally from each coarse-grid element, as pictured in \cref{fig:div_free_interp}.  The key step in the construction is to use the finite-element interpolation operator to fix all fine-mesh DoFs on the edges of the coarse-mesh triangle, and use the three edge DoFs on the ``interior'' fine-mesh triangle to enforce \cref{eqn:divfreefine}.  A column-wise construction of the interpolation operator is then given by first computing $\bm{c}_i = \hat{P}_{\bm u}\bm{e}_i$,
where $\bm{e}_i$ is the $i^{\text{th}}$ canonical unit vector on the coarse mesh, and $\hat{P}_{\bm u}$ is the standard finite-element interpolation operator.
Then, the entries in $\bm{c}_i$ that correspond to the interior bubble DoFs depicted in \cref{fig:div_free_interp} are replaced by values that ensure satisfaction of \cref{eqn:divfreefine}.  Consider the triangle,  $t^{1,2,3}$, in \cref{fig:div_free_interp} with
vertices labeled $1,2,3$.  Let $c_b^{v_1,v_2}$ denote the entry in $\bm{c}_i$ associated with the bubble degree of freedom
on the edge between vertices $v_1$ and $v_2$, and let $\bm{c}_v$ denote the
entries in $\bm{c}_i$ associated with the $x$ and $y$ DoFs on vertex $v$.
To make the function represented by $\bm{c}_i$ divergence-free on $t^{1,2,3}$, we set the coefficients of the interior bubble degree of freedom, $c_b^{1,3}$,
to cancel that from the remaining DoFs,
\begin{equation*}
	c_b^{1,3} = - \left(
	c_b^{1,2} + c_b^{2,3} + \frac{1}{|\partial t^{1,2,3}|}
	\sum_{v=1}^{3} \int_{\partial t^{1,2,3}} \bm{n}^T \bm{c}_v \lambda_v \;d\text{s}
	\right),
\end{equation*}
where $\bm{n}$ is the outward normal, and $\lambda_v$ is the linear basis function associated with vertex $v$.
Note that this calculation is simplified by choosing the bubble degrees of freedom to be defined directly as integrals over the associated edges.

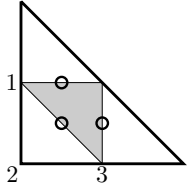
\begin{figure}[h!]
	\begin{center}
		\scalebox{0.9}{
			\begin{tikzpicture}[label distance=-2mm,scale=0.6]
				\def\h{2}
				\node (a) at (\h*0,\h*2) {};
				\node[label=west:{1}] (b) at (\h*0,\h*1) {}; \node (c) at (\h*1,\h*1) {};
				\node[label={[label distance=-3mm]south west:{2}}] (d) at (\h*0,\h*0) {};
				\node[label=south:{3}] (e) at (\h*1,\h*0) {}; \node (f) at (\h*2,\h*0) {};
				\draw[very thick] (a.center) -- (b.center) -- (d.center) -- (e.center) -- (f.center) -- (c.center) -- (a.center);
				\draw (b.center) -- (c.center) -- (e.center) -- (b.center);
				\path[fill=black, opacity=0.2] (b.center) -- (c.center) -- (e.center) -- cycle;
				\draw[line width=1.0pt] ($(b)+\h*(0.5,0)$)   circle (4pt);
				\draw[line width=1.0pt] ($(c)-\h*(0,0.5)$)   circle (4pt);
				\draw[line width=1.0pt] ($(c)-\h*(0.5,0.5)$) circle (4pt);
			\end{tikzpicture}
		}
	\end{center}
	\caption{ A coarse mesh element, $T\in \mathcal{T}_H$, and the four fine-mesh triangles that interpolate from it.
		The circles represent the bubble DoFs that are used to satisfy the divergence-free interpolation condition.
		The gray fine-grid triangle is referred to as the interior triangle,
		while the other fine-grid triangles are the ``corner'' triangles.}
	\label{fig:div_free_interp}
\end{figure}

\subsection{Monolithic Multigrid Relaxation}

It is widely recognized that standard relaxation schemes, such as Jacobi or Gauss-Seidel, are not effective components of a multigrid algorithm for many saddle-point problems \cite{MR1807961}.  Instead, several families of relaxation schemes tailored to this setting have been proposed and studied.  Here, we focus on two classes of such methods, Vanka and Braess-Sarazin relaxation.

\subsubsection{Vanka Relaxation Scheme}\label{subsec:Vanka}
Vanka relaxation, originally proposed in \cite{VANKA1986}, has been adapted for a wide variety of discretizations and saddle-point problems \cite{VolkerLutz2000, LarinReusken2008, JAdler_etal_2015b, MR2840198}.  At its root, Vanka methods are overlapping block relaxation schemes, that can be considered in either additive (block-Jacobi) or multiplicative (block-Gauss-Seidel) form.  While multiplicative variants have long been considered, the additive form has attracted recent interest, due to its natural parallelization \cite{farrell2019c, PFarrell_etal_2019a}.

Given a decomposition of the set of DoFs into $L$ (overlapping) blocks, a standard Schwarz method is most easily defined by defining the restriction operator, $V_{\ell}$, from global vectors to local vectors on block $\ell$.  Then, given a current residual, $\bm{r}^{(j)} = \bm{b} - \mathcal{A}^{\text{RQ}}\bm{x}^{(j)}$, we can solve the projected system
\begin{equation*}
	V_\ell\mathcal{A}^{\text{RQ}}V_\ell^{\top} \hat{\bm{x}}_\ell=V_\ell\vec{r}^{(j)},
\end{equation*}
on each block.
The weighted additive form of the relaxation is then
\begin{equation*}
	\bm{x}^{(j+1)} = \bm{x}^{(j)} +\omega\sum_{\ell}V_{\ell}^{\top}D_{\ell}\hat{\bm{x}}_{\ell},
\end{equation*}
where
$\omega$ is a damping parameter and
$D_{\ell}$ is a diagonal weight matrix that is chosen to compensate for the fact that different (global) DoFs appear in different numbers of patches.
Here, we consider $D_{\ell}$ to be given by the ``natural weights'' of the overlapping block decomposition, where each diagonal entry is equal to the reciprocal of the number of
patches that the corresponding degree of freedom appears in.

The construction of the Vanka blocks is critically important to the success of the resulting multigrid method, with general principles being well-understood for their construction in several contexts \cite{arnold2000multigrid, farrell2019c, MR2840198}.  Following the construction of the reduced quadrature discretization above, our primary concern is in ensuring relaxation suitably handles the locally-supported basis functions for the divergence-free space \cite{Schoberl1998, Schoberl1999a, Schoberl1999b}.  Since those basis functions are supported around the nodes of the mesh, as shown in \cref{fig:DivFreeBasis}, we also use nodal patches for the Vanka blocks, see  \cref{fig:patches}.  For the full poroelasticity system, we use the patches shown at right; those at left will be used within the Braess-Sarazin relaxation scheme discussed next.

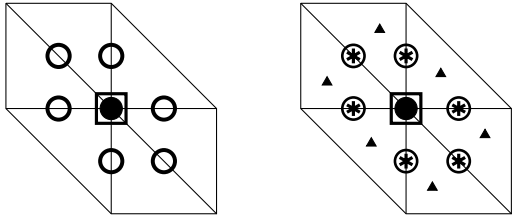
\begin{figure}[h!]
	\begin{center}
		\begin{tikzpicture}[scale=0.7]
			\def\h{2}
			\node (a) at (\h*0,\h*2) {}; \node (b) at (\h*1,\h*2) {};
			\node (c) at (\h*0,\h*1) {}; \node (d) at (\h*1,\h*1) {}; \node (e) at (\h*2,\h*1) {};
			\node (f) at (\h*1,\h*0) {}; \node (g) at (\h*2,\h*0) {};
			\draw (a.center) -- (b.center) -- (e.center) -- (g.center) -- (f.center) -- (c.center) -- (a.center);
			\draw (a.center) -- (d.center) -- (g.center);
			\draw (c.center) -- (d.center) -- (e.center);
			\draw (b.center) -- (d.center) -- (f.center);
			\filldraw (d) circle (6pt);
			\draw[mark=square,very thick, mark size=8pt] plot coordinates{(d)};
			\draw[line width=1.5pt] ($(d)+\h*(0,0.5)$) circle (6pt);
			\draw[line width=1.5pt] ($(d)+\h*(0.5,0)$) circle (6pt);
			\draw[line width=1.5pt] ($(d)-\h*(0,0.5)$) circle (6pt);
			\draw[line width=1.5pt] ($(d)-\h*(0.5,0)$) circle (6pt);
			\draw[line width=1.5pt] ($(d)+\h*(0.5,-0.5)$) circle (6pt);
			\draw[line width=1.5pt] ($(d)+\h*(-0.5,0.5)$) circle (6pt);
		\end{tikzpicture}
		\hspace*{0.05\linewidth}
		\begin{tikzpicture}[scale=0.7]
			\def\h{2}
			\node (a) at (\h*0,\h*2) {}; \node (b) at (\h*1,\h*2) {};
			\node (c) at (\h*0,\h*1) {}; \node (d) at (\h*1,\h*1) {}; \node (e) at (\h*2,\h*1) {};
			\node (f) at (\h*1,\h*0) {}; \node (g) at (\h*2,\h*0) {};
			\draw (a.center) -- (b.center) -- (e.center) -- (g.center) -- (f.center) -- (c.center) -- (a.center);
			\draw (a.center) -- (d.center) -- (g.center);
			\draw (c.center) -- (d.center) -- (e.center);
			\draw (b.center) -- (d.center) -- (f.center);
			\filldraw (d) circle (6pt);
			\draw[mark=square,very thick, mark size=8pt] plot coordinates{(d)};
			\draw[line width=1pt] ($(d)+\h*(0,0.5)$) circle (6pt);
			\draw[line width=1pt] ($(d)+\h*(0.5,0)$) circle (6pt);
			\draw[line width=1pt] ($(d)-\h*(0,0.5)$) circle (6pt);
			\draw[line width=1pt] ($(d)-\h*(0.5,0)$) circle (6pt);
			\draw[line width=1pt] ($(d)+\h*(0.5,-0.5)$) circle (6pt);
			\draw[line width=1pt] ($(d)+\h*(-0.5,0.5)$) circle (6pt);
			
			\draw[mark=asterisk,very thick, mark size=4pt] plot coordinates{($(d)+\h*(0,0.5)$)};
			\draw[mark=asterisk,very thick, mark size=4pt] plot coordinates{($(d)+\h*(0.5,0)$)};
			\draw[mark=asterisk,very thick, mark size=4pt] plot coordinates{($(d)-\h*(0,0.5)$)};
			\draw[mark=asterisk,very thick, mark size=4pt] plot coordinates{($(d)-\h*(0.5,0)$)};
			\draw[mark=asterisk,very thick, mark size=4pt] plot coordinates{($(d)+\h*(0.5,-0.5)$)};
			\draw[mark=asterisk,very thick, mark size=4pt] plot coordinates{($(d)+\h*(-0.5,0.5)$)};
			
			\draw[mark=triangle*,mark size=3pt] plot coordinates{($(d)+\h*(0.33,0.33)$)};
			\draw[mark=triangle*,mark size=3pt] plot coordinates{($(d)+\h*(-0.25,0.75)$)};
			\draw[mark=triangle*,mark size=3pt] plot coordinates{($(d)+\h*(-0.75,0.25)$)};
			\draw[mark=triangle*,mark size=3pt] plot coordinates{($(d)-\h*(0.33,0.33)$)};
			\draw[mark=triangle*,mark size=3pt] plot coordinates{($(d)-\h*(-0.25,0.75)$)};
			\draw[mark=triangle*,mark size=3pt] plot coordinates{($(d)-\h*(-0.75,0.25)$)};
			
		\end{tikzpicture}
	\end{center}
	\caption{Choices of DoFs for blocks within Vanka relaxation on the displacement subsystem (left) and full poroelasticity system (right).  In both figures, filled circles and squares denote the DoFs associated with the linear component of the displacement, while empty circles show the bubble DoFs.  At right, asterisks are used to denote the RT0 DoFs for the Darcy velocity space, and triangles denote the P0 DoFs for the pressure space.}
	\label{fig:patches}
\end{figure}

\subsubsection{Braess-Sarazin Relaxation Schemes}\label{subsec:BSR}
Braess-Sarazin-type algorithms were originally proposed as relaxation schemes for the Stokes' equations \cite{BRAESS1997}, using an approximate block factorization as an approximation to the original system.  Like Vanka relaxation, they have also been extended to many discretizations and systems \cite{VolkerLutz2000, LarinReusken2008, JAdler_etal_2015b, NLA2147, HMFEMStokes}, and are closely related to Uzawa schemes \cite{NLA:NLA2074}.  Using the $2\times 2$ block structure in \cref{Saddle-point-system}, given a residual $\bm{r}^{(j)}$, \textit{exact} Braess-Sarazin relaxation updates the approximation as
\begin{equation}\label{eqn:BSR_standard}
	\begin{pmatrix} \bm{y}^{(j+1)} \\ p^{(j+1)} \end{pmatrix} =
	\begin{pmatrix} \bm{y}^{(j)} \\ p^{(j)} \end{pmatrix} + \omega
	\begin{pmatrix}
		F & B^{\top}\\
		B & -C
	\end{pmatrix}^{-1} \bm{r}^{(j)},
\end{equation}
where $F$ is an approximation of $A$,
often taken to be $\omega_bI$ or $\omega_b{\rm{diag}}(A)$,
with weight $\omega_b$ chosen to separately damp the correction to the variables in $\bm{y}$ from that given by the global parameter, $\omega$.

The matrix inversion in \cref{eqn:BSR_standard} can be carried out in two stages as solving
\begin{align}
	&S\delta p = BF^{-1}\bm{r}^{(j)}_{\bm y}- \bm{r}^{(j)}_{p},& \label{eqn:bsr_p}\\
	&F\delta\bm{y}    = \bm{r}^{(j)}_{\bm y}-B^{\top}\delta p, \nonumber &
\end{align}
where $S= C + BF^{-1}B^{\top}$, and $\bm{r}^{(j)}_{\bm y}$ and $\bm{r}^{(j)}_p$ are the first and second block components of $\bm{r}^{(j)}$ in this decomposition.
In \emph{exact} BSR, there is a significant cost associated with the inversion of the Schur complement, $S$, in \cref{eqn:bsr_p}.  For this reason, inexact BSR methods were proposed,
where the exact solution of the Schur complement equation is replaced by a suitable iterative method applied to \cref{eqn:bsr_p}, typically given by a few steps of a relaxation scheme or of a multigrid cycle for that subsystem.

Here, we make use of the block structure of $A$, to note that
\[
BA^{-1}B^{\top} = \alpha^2 B_{\bm u}\left(A^{\text{RQ}}_{\bm{u}}\right)^{-1}B_{\bm u}^{\top} + \tau B_{\bm w}M_{\bm w}^{-1} B_{\bm w}^{\top},
\]
and that, particularly in the large $\lambda$ limit, $B_{\bm u}\left(A^{\text{RQ}}_{\bm{u}}\right)^{-1}B_{\bm u}^{\top}$ is well-approximated  by a scaled mass matrix on the pressure space. This idea is motivated by the inf-sup condition~\eqref{eqn:stokesA-inf-sup-RQ} and is, essentially, the well-known ``fixed-stress" approximation~\cite{kim2011stability}. Thus, we first approximate
\[
S \approx \frac{1}{M}M_p + \frac{\alpha^2}{\lambda+2\mu/d} M_p + \tau B_{\bm w}D_{\bm w}^{-1}B_{\bm w}^{\top},
\]
where $D_{\bm w}$ is the diagonal of $M_{\bm w}$, and refer to the method with exact inversion of this system in \cref{eqn:bsr_p} as exact BSR.  This is in combination with a single sweep of a Jacobi iteration on $M_{\bm w}$ to approximate the $\bm{w}$ component of $\bm{y}$, and a single iteration of the Vanka relaxation with patches chosen as shown at left of \cref{fig:patches} to approximate the inversion of $A^{\text{RQ}}_{\bm{u}}$ to approximate the $\bm{u}$ component of $\bm{y}$.  For inexact BSR, we replace the exact solve with the approximation to $S$ by a single sweep of weighted Jacobi (with relaxation weight $\omega_J$) on \cref{eqn:bsr_p}.

A downside of these relaxation schemes is their dependence on multiple relaxation parameters in their component parts.  While some general principles exist to help us choose those parameters, often they are fixed by expensive brute-force testing.  Here, we will make use of local Fourier analysis to make these choices.

\section{Local Fourier Analysis}\label{sec:LFA}
LFA is a common and useful
 tool to predict and analyze actual performance of algorithms
for the solution of discretized PDEs \cite{wienands2004practical,
  MR1807961}.  In particular, it has been used to analyze the
construction and optimization of the components of a multigrid algorithm,
such as relaxation schemes and grid-transfer operators \cite{gaspar2017fixed, NLA:NLA2074, NLA2147, HM2018LFALaplace, HMFEMStokes, MR2840198}. In this paper, we
apply the LFA framework developed in \cite{gaspar2017fixed, NLA2147, HMFEMStokes, MR2840198} to  monolithic multigrid methods for the discretized Biot model in \cref{block_form_rq}, in order to optimize the relaxation parameters described above.

\subsection{Two-grid LFA}

Following \cite{MR1807961,wienands2004practical}, we first consider two-dimensional infinite uniform grids, $\mathbf{G}_h=\big\{\bm{x}:=(x_1,x_2)=(k_{1},k_{2})h,\quad (k_1,k_2)\in \mathbb{Z}^2\big\}$.
Let $L_h$ be a scalar Toeplitz operator defined as $\displaystyle L_{h}w_{h}(\bm{x})=\sum_{\bm{\kappa}\in\bm{S}}s_{\bm{\kappa}}w_{h}(\bm{x}+\bm{\kappa}h)$, $\bm{\kappa}=(\kappa_{1},\kappa_{2})\in \bm{S}$,
with constant coefficients $s_{\bm{\kappa}}\in \mathbb{R} \,(\textrm{or} \,\,\mathbb{C})$, and where $w_{h}(\bm{x})$ is
a function in $l^2(\mathbf{G}_{h})$. Here, $\bm{S}\subset \mathbb{Z}^2$ is a finite index set over which the stencil is nonzero.  Because $L_h$ is formally diagonalized by the Fourier modes $\varphi(\bm{\theta},\bm{x})= e^{\iota\bm{\theta}\cdot\bm{x}/{h}}=e^{\iota \theta_1x_1/h}e^{\iota \theta_2x_2/h}$, where $\bm{\theta}=(\theta_1,\theta_2)$, we use $\varphi(\bm{\theta},\bm{x})$ as a Fourier basis with $\bm{\theta}\in \big[-\frac{\pi}{2},\frac{3\pi}{2}\big)^{2}$ (or any pair of intervals with length $2\pi$). High and low frequencies for standard coarsening (as considered here) are given by
\begin{equation*}
  \bm{\theta}\in T^{{\rm low}} =\left[-\frac{\pi}{2},\frac{\pi}{2}\right)^{2}, \, \bm{\theta}\in T^{{\rm high}} =\displaystyle \left[-\frac{\pi}{2},\frac{3\pi}{2}\right)^{2} \bigg\backslash \left[-\frac{\pi}{2},\frac{\pi}{2}\right)^{2}.
\end{equation*}
\begin{definition}\label{formulation-symbol-classical}
If for all grid functions $\varphi(\bm{\theta},\bm{x})$,
$L_{h}\varphi(\bm{\theta},\bm{x})= \widetilde{L}_{h} (\bm{\theta})\varphi(\bm{\theta},\bm{x}),$
 we call $\widetilde{L}_{h}(\bm{\theta})=\displaystyle\sum_{\bm{\kappa}\in\bm{S}}s_{\bm{\kappa}}e^{\iota \bm{\theta}\cdot\bm{\kappa}}$ the symbol of $L_{h}$.
\end{definition}
 
For simple scalar operators (such as second-order finite-difference or finite-element discretizations of constant-coefficient diffusion equations), the performance of a standard relaxation method, such as the weighted Jacobi or Gauss-Seidel iterations, is easily analyzed by considering the symbol of the relaxation scheme \cite{wienands2004practical, MR1807961}.  From the heuristic argument that coarse-grid correction effectively reduces error in $T^{{\rm low}}$, the LFA smoothing factor for a relaxation scheme with error-propagation operator given by $I-\omega M_h^{-1}L_h$ is introduced as
$\mu = \sup_{\bm{\theta}\in T^{{\rm high}}} \left|1-\omega\widetilde{M}_h(\bm{\theta})^{-1}\widetilde{L}_h(\bm{\theta})\right|$,
where $\omega$ is a damping parameter.

While the LFA smoothing factor provides excellent predictions of true multigrid performance for simple discretizations of simple operators, it is known to provide poor predictions when used on complicated or higher-order operators \cite{HM2018LFALaplace}.  In such settings, it is more reliable to use the two-grid LFA convergence factor, which takes into account the coarse-grid correction process.  To do this, we define the harmonic modes by taking $\boldsymbol{\theta}^{\boldsymbol{\alpha}} = (\theta_1^{\alpha_1},\theta_2^{\alpha_2})=\boldsymbol{\theta}^{00}+\pi\cdot\boldsymbol{\alpha}$, $\boldsymbol{\alpha}=(\alpha_1,\alpha_2)\in\big\{(0,0),(1,0),(0,1),(1,1)\big\}$ and $\boldsymbol{\theta}^{00}\in T^{{\rm low}}$.
That is, for each low-frequency mode $\bm \theta\in T^{\rm low}$, we define a four-dimensional harmonic space,
$  \mathcal{F}(\bm \theta)={\rm span}\Big\{ \varphi(\boldsymbol{\theta^{\alpha}},\cdot): \boldsymbol{\alpha}\in\big\{(0,0),(1,0),(0,1),(1,1)\big\}\Big\}$,
which is invariant for standard full-coarsening two-grid algorithms.

To compute the LFA two-grid convergence factor, we must obtain an LFA representation of all components of the multigrid cycle.  This requires finding symbols for not just the fine-grid operator and relaxation scheme, but also for the interpolation and restriction operators, and for the coarse-grid operator.  The \textit{symbol} of the two-grid algorithm is a $4\times 4$ matrix that describes the action of the two-grid algorithm, and comes from noting that structured constant-coefficient interpolation and restriction operators map naturally between the four fine-grid harmonic modes in $\mathcal{F}(\bm \theta)$ and the coarse-grid mode $2{\bm \theta}$.  Writing $\widetilde{L}_{2h}$ for the symbol of the coarse-grid operator and $\widetilde{P}_h$ and $\widetilde{R}_h$ for the symbols of the interpolation and restriction operators, the Fourier representation of the two-grid error-propagation operator is defined as
\begin{equation*}
 \widetilde{\boldsymbol{E}}_{TG}(\boldsymbol{\theta})= \widetilde{\bm{E}}_s^{\nu_2}(\boldsymbol{\theta})\big(I-\widetilde{\boldsymbol {P}}_h(\boldsymbol{\theta})(\widetilde{{L}}_{2h}(2\boldsymbol{ \theta}))^{-1}\widetilde{\boldsymbol{ R}}_h(\boldsymbol{\theta})\widetilde{\boldsymbol{ L}}_{h}(\boldsymbol{\theta})\big)\widetilde{\boldsymbol{E}}_s^{\nu_1}(\boldsymbol{\theta}),
\end{equation*}
where
\begin{eqnarray*}
\widetilde{\bm{L}}_h(\bm{\theta})&=&\text{diag}\left\{\widetilde{{L}}_h(\bm{\theta}^{00}), \widetilde{{L}}_h(\bm{\theta}^{10}),\widetilde{{L}}_h(\bm{\theta}^{01}),
\widetilde{{L}}_h(\bm{\theta}^{11})\right\},\\
\widetilde{\bm{E}}_s(\bm{\theta})&=&\text{diag}\left\{\widetilde{E}_s(\bm{\theta}^{00}),
\widetilde{E}_s(\bm{\theta}^{10}),\widetilde{E}_s(\bm{\theta}^{01}),
\widetilde{E}_s(\bm{\theta}^{11})\right\},\\
\widetilde{\bm{R}}_h(\bm{\theta})&=&\left(\widetilde{R}_h(\bm{\theta}^{00}),\widetilde{R}_h(\bm{\theta}^{10}),
\widetilde{R}_h(\bm{\theta}^{01}),\widetilde{R}_h(\bm{\theta}^{11}) \right),\\
\widetilde{\bm{P}}_h(\bm{\theta})&=&\left(\widetilde{P}_h(\bm{\theta}^{00});\widetilde{P}_h(\bm{\theta}^{10});
\widetilde{P}_h(\bm{\theta}^{01});\widetilde{P}_h(\bm{\theta}^{11}) \right).
\end{eqnarray*}
Here, ${\rm diag}\{T_1,T_2,T_3,T_4\}$ denotes the block diagonal matrix with diagonal blocks, $T_1, T_2, T_3$, and $T_4$ \cite{MR1807961,wienands2004practical}.
With this, we define the two-grid LFA convergence factor.
\begin{definition}
The two-grid LFA convergence factor, $\rho_{{\rm LFA}}$, is defined as
\begin{equation}\label{real-TGM}
  \rho_{LFA} = {\rm sup}\{\rho(\widetilde{\boldsymbol{E}}_{TG}(\boldsymbol{\theta}): \boldsymbol{\theta}\in T^{{\rm low}}\},
\end{equation}
where $\rho(\widetilde{\boldsymbol{E}}_{TG}(\boldsymbol{\theta}))$ denotes the spectral radius of matrix $\widetilde{\boldsymbol{E}}_{TG}(\boldsymbol{\theta})$.
\end{definition}

As described above, it is natural to introduce algorithmic parameters when designing multigrid methods for complicated problems.  It is for this purpose that we introduce LFA here.  While it is often possible to optimize the LFA smoothing factor for simple problems through analytical means (see, for example, \cite{NLA2147}), optimizing the two-grid LFA convergence factor for more complicated problems and algorithms is a challenging task \cite{LFAoptAlg}.  Here, we will develop LFA representations of the monolithic multigrid algorithms above, and optimize the two-grid convergence factor in \cref{real-TGM} using brute-force sampling.  In particular, while the true two-grid LFA convergence factor is most naturally defined as a supremum over a continuous range of values of $\bm{\theta}$, we will use a discrete sampling at a finite number of evenly-spaced frequencies in the domain $(-\frac{\pi}{2},\frac{\pi}{2}]^2$, but without any change of notation.

\subsection{LFA Representation of Discretized System}\label{subsec:LFA-Biot}

To extend Fourier analysis to the full discretized system in \eqref{block_form_rq}, we must account for the fact that the system is not readily extended to a Toeplitz operator on an infinite grid, unlike in the scalar case.  This occurs in two ways.  First, as is clear, the discretization of a coupled system of PDEs leads, at best, to a block operator with Toeplitz blocks.  Secondly, even within a single block, such as $A_{\bm u}^{\text{RQ}}$, there are different ``types'' of DoFs, leading to nested block-Toeplitz structure.

The key concept in enabling LFA is in expressing the block-Toeplitz structure of the multigrid hierarchy and relaxation operator relative to the infinite grid, $\bm{G}_h$.  With triangular cells and face- and cell-based DoFs, this is slightly non-intuitive.  \cref{fig:DoF_structure2} shows the DoFs in a typical pair of elements on the mesh, constructed by ``cutting'' a quadrilateral cell into two triangles.  With this arrangement of DoFs, we have natural periodic structure for the P1 components of the displacement (2 DoFs, 1 for each component of the 2D displacement vector, $\bm{u}$), but also for the 6 face-based DoFs, coming in two pairs of 3 DoFs, corresponding to the normal displacement bubble component along each face and the face-based Raviart-Thomas DoFs for the Darcy velocity.  Note that we do not ``collapse'' the Fourier representation of the face-based DoFs to a single component within the symbol; this is not possible, since the matrix connections between face-based DoFs along (for example) horizontal edges will be different than those along diagonal edges.  Instead, we will maintain an entry in the Fourier symbol for each ``type'' of face-based DoF. Similarly, the connections between the P0 DoFs in the lower-left triangles and the other variables in the cell may be different than those with the P0 DoFs in the upper-right triangles.  Thus, we introduce Fourier representations of both of these DoFs.  In total, this yields a $10\times 10$ block Fourier symbol for the operator, $\mathcal{A}^{\text{RQ}}$.  With this structure, it is a straightforward (but tedious) task to compute the Fourier symbol of $\mathcal{A}^{\text{RQ}}$.  We outline the main ideas here, but leave the technical details as Supplementary Material for the interested reader.

\begin{figure}[h!]
\begin{center}
\begin{tikzpicture}[scale=0.65]
    \draw [-] [thick] (0,0) --(4,0);
    \draw [-] [thick] (0,0) --(0,4);
    \draw [-] [thick] (0,4) --(4,4);
    \draw [-] [thick] (4,0) --(4,4);
    \draw [-] [thick] (0,4) --(4,0);

    \node (d1) at (0,0) {};
    \node (d2) at (0,4) {};
    \node (d3) at (4,0) {};
    \node (d4) at (4,4) {};
    \filldraw (d1) circle (6pt);
    \filldraw (d2) circle (6pt);
    \filldraw (d3) circle (6pt);
    \filldraw (d4) circle (6pt);

    \draw[line width=1pt] (2,0) circle (6pt);
    \draw[line width=1pt] (2,4) circle (6pt);
    \draw[line width=1pt] (0,2) circle (6pt);
    \draw[line width=1pt] (4,2) circle (6pt);
    \draw[line width=1pt] (2,2) circle (6pt);

    \draw[mark=asterisk,very thick, mark size=4pt] plot coordinates{(2,0)};
    \draw[mark=asterisk,very thick, mark size=4pt] plot coordinates{(2,4)};
    \draw[mark=asterisk,very thick, mark size=4pt] plot coordinates{(0,2)};
    \draw[mark=asterisk,very thick, mark size=4pt] plot coordinates{(4,2)};
    \draw[mark=asterisk,very thick, mark size=4pt] plot coordinates{(2,2)};

    \draw[mark=triangle*,mark size=6pt] plot coordinates{(1,1)};
    \draw[mark=triangle*,mark size=6pt] plot coordinates{(3,2.5)};

    \draw[mark=square,very thick, mark size=8pt] plot coordinates{(0,0)};
    \draw[mark=square,very thick, mark size=8pt] plot coordinates{(4,0)};
    \draw[mark=square,very thick, mark size=8pt] plot coordinates{(0,4)};
    \draw[mark=square,very thick, mark size=8pt] plot coordinates{(4,4)};

\end{tikzpicture}
\end{center}
\caption{Cut quadrilateral mesh cell showing DoFs.  Filled circles and squares denote the DoFs associated with the linear component of the displacement, while empty circles show the bubble DoFs.  Asterisks are used to denote the RT0 DoFs for the Darcy velocity space, and triangles denote the P0 DoFs for the pressure space.}
\label{fig:DoF_structure2}
\end{figure}
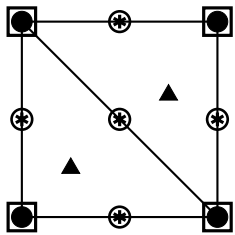

First, we ``expand'' its block structure from the canonical $3\times 3$ form to that of a $10\times 10$ block-structured linear system, with 1 block for each DoF identified above and in  \cref{fig:DoF_structure2}.  In this ordering, each diagonal block is a Toeplitz matrix, whose symbol can be calculated according to \cref{formulation-symbol-classical}.  Off-diagonal blocks in this structure are also Toeplitz matrices, although we also account for the offsets between DoF locations in the mesh in the Fourier symbols, in a similar manner to what was done in \cite{NLA2147, HMFEMStokes}.  Details of these calculations are presented in \Cref{secSM:LFA-RQD}.

Similarly, Fourier representations of the grid-transfer operators can also be computed in block form.  Taking the block-diagonal interpolation operator from \cref{eq:block_interp}, we separately compute Fourier representations of each interpolation operator, accounting for block structure of the DoFs and the details of the interpolation schemes.  Since we have a 10-dimensional space associated with each Fourier frequency, and interpolation and restriction map between four harmonic frequencies on the fine mesh and a single frequency on the coarse mesh, this results in a $40\times 10$ symbol for interpolation and a $10\times 40$ symbol for restriction, which can be broken into $10\times 10$ blocks giving the part of the symbol associated with each individual frequency in the harmonic set.  These $10\times 10$ blocks can be broken down further, based on the block-diagonal form in \cref{eq:block_interp}, to a $5\times 5$ block associated with displacements, a $3\times 3$ block for Darcy velocities, and a $2\times 2$ block for pressures.  It is somewhat more natural to compute Fourier representations of the restriction operators, and use (scaled) transposes of these symbols for interpolation, which is the approach followed in \Cref{secSM:LFA-transfer-operator}.

Finally, Fourier representations of the relaxation schemes can be computed.  For Vanka relaxation, this follows the approach presented in \cite{PFarrell_etal_2019a}, where the Fourier representation of a residual at given frequency is restricted, via $V_\ell$, to a Vanka patch, and the action of the local solve is computed exactly on this basis, with accounting for the overlap between patches.  Details are given in \Cref{secSM:LFA-Vanka}.   For Braess-Sarazin relaxation, the symbols of $F$, $S$, and the approximation to $S$ are readily computed in the same manner as the symbols above, and the incorporation of a relaxation scheme in place of an exact inversion of $S$ is done similarly. See \Cref{secSM:LFA-BSR} for the details.

\subsection{Validation and Optimization}

While we are primarily interested in the use of monolithic multigrid as a preconditioner for GMRES, we begin by studying its use as a stationary iteration, for the purposes of optimizing parameters in the methods.  We use LFA to predict convergence factors associated with given choices of parameters, and compare to measured performance of a stationary iteration, approximating the asymptotic convergence factor of the iteration as $\rho_{N} = \frac{\|\bm{r}^{(j)}\|}{\|\bm{r}^{(j-1)}\|}$,
where $\bm{r}^{(j)}$ is the residual at the $j$-th iteration.  To ensure a good approximation of the asymptotic convergence factor, iterations are run until the change in the measured convergence factor
between iterations is less than $10^{-3}$.
While LFA can be made exact in the case of periodic boundary conditions, the numerical tests
were performed using Dirichlet boundary conditions as is more common.
We consider $\Omega = [0,1]^2$ covered with a uniform
triangular grid with mesh spacing $h=1/64$.
As a test problem, we consider a zero right-hand side,
with a random initial guess for a single time step with $\tau = 1$.
To demonstrate the impact of the physical parameters, the permeability, $\bm{K}$,
and the Poisson ratio $\nu$ are varied.
In all test cases, we consider a diagonal permeability tensor $\bm{K} = k\bm{I}$.
Additionally, $\alpha=1$, $\mu_f = 1$, $M = 10^6$,
and $E= 3\times10^4$. LFA is performed using 32 evenly-spaced sample points in each coordinate direction, offset so that no sample is taken at the origin in Fourier space.  Note that the two-grid LFA convergence factor, \cref{real-TGM}, is a function of the damping parameter, $\omega$. In order to obtain an efficient algorithm, we use brute-force sampling to optimize the LFA-predicted two-grid convergence factors over choices of $\omega$, with steps of size $0.02$.

\begin{table}[htp!]
 	\footnotesize
	\begin{center}
		\caption{Optimized relaxation parameter $(\omega_{\rm opt}$), observed convergence factor ($\rho_N$) with Dirichlet boundary conditions, and optimal two-grid LFA predictions ($\rho_{LFA}$) for additive Vanka relaxation ($\nu_1=\nu_2=2$) on the full system with the 20-DoF vertex-based patch
                 (\cref{fig:patches}, right), varying $k$ and $\nu$.}
        \label{tab:vanka22}
		\begin{tabular}{|l |l | l l l l l l|}
			\hline 
            \multicolumn{2}{|c|}{\backslashbox{$\nu$}{$k$}} & $1$ & $10^{-2}$ & $10^{-4}$ & $10^{-6}$ & $10^{-8}$ & $10^{-10}$\\
			\hline\hline
				$\nu=0$& $\omega_{\rm opt}$         &0.92   &0.92      &0.92    &0.92      &0.88     &0.76 \\\hline
            \multicolumn{2}{|c|}{$\rho_{LFA}$}      &0.705   &0.705    &0.705    &0.702   &0.490    &0.552   \\
            \multicolumn{2}{|c|}{${\rho}_N$}        &0.722   &0.722    &0.722    &0.722   &0.475    &0.547   \\
            \hline\hline
			$\nu=0.2$& $\omega_{\rm opt}$           &0.90      &0.90    &0.90       &0.90     &0.86      &0.76       \\\hline
            \multicolumn{2}{|c|}{$\rho_{LFA}$}      &0.624     &0.624   &0.624      &0.622     &0.474    &0.557     \\
            \multicolumn{2}{|c|}{${\rho}_N$}        &0.610	   &0.610	&0.610	    &0.611	  &0.468	&0.552   \\
            \hline\hline
			$\nu =0.4$& $\omega_{\rm opt}$         &0.80    &0.80     &0.80     &0.80     &0.78     &0.76      \\\hline
            \multicolumn{2}{|c|}{$\rho_{LFA}$}     &0.410    &0.410    &0.410    &0.410    &0.436    &0.562     \\
            \multicolumn{2}{|c|}{${\rho}_N$}      &0.403	&0.403	  &0.403	&0.404	  &0.432	&0.557   \\
            \hline\hline
			$\nu=0.45$& $\omega_{\rm opt}$       &0.76     &0.76   &0.76     &0.76     &0.76     &0.76    \\\hline
            \multicolumn{2}{|c|}{$\rho_{LFA}$ }  &0.492    &0.492  &0.492    &0.492    &0.498    &0.564          \\
            \multicolumn{2}{|c|}{${\rho}_N$}     &0.489	   &0.489  &0.489    &0.489   &0.495     &0.560 \\
            \hline\hline
			$\nu=0.49$& $\omega_{\rm opt}$       &0.74    &0.74    &0.74   &0.74     &0.74      &0.76    \\\hline
            \multicolumn{2}{|c|}{$\rho_{LFA}$}   &0.572   &0.572   &0.572  &0.572    &0.572     &0.573        \\
            \multicolumn{2}{|c|}{${\rho}_N$}     &0.569   &0.569   &0.569  &0.570    &0.570     &0.569  \\
            \hline\hline
			$\nu =0.499$& $\omega_{\rm opt}$    &0.72     &0.72       &0.72    &0.72      &0.72     &0.74     \\\hline
            \multicolumn{2}{|c|}{$\rho_{LFA}$}  &0.600    & 0.600     &0.600   &0.600     &0.600    &0.599             \\
            \multicolumn{2}{|c|}{${\rho}_N$}    &0.596    &0.596      &0.596   &0.596     &0.596    &0.596 \\
			\hline
		\end{tabular}
	\end{center}
\end{table}

\begin{table}[htp!]
	\footnotesize
	\begin{center}
		\caption{Optimized relaxation parameters ($\omega_{J,\rm opt}$, $\omega_{\rm opt}$), observed convergence factor ($\rho_N$) with Dirichlet boundary conditions, and optimal two-grid LFA
         predictions ($\rho_{LFA}$) for inexact Braess-Sarazin relaxation (using one sweep of damped Jacobi
         for the approximate solve of the Schur complement and additive Vanka for the displacement block), varying k and $\nu$.}
                \label{tab:bsr_vanka11}

		\begin{tabular}{|l |l | l l l l l l|}
			\hline 
            \multicolumn{2}{|c|}{\backslashbox{$\nu$}{$k$}} & $1$ & $10^{-2}$ & $10^{-4}$ & $10^{-6}$ & $10^{-8}$ & $10^{-10}$\\
			\hline\hline
				$\nu=0$& $\omega_{J,\rm opt}$         &1.10     &1.15      &1.06     &1.26     &0.98       &0.96   \\
                 & $\omega_{ \rm opt}$                &0.72     &0.68       &0.76    &0.62      & 0.88      &0.98        \\\hline
            \multicolumn{2}{|c|}{$\rho_{LFA}$}        &0.648    &0.649      &0.656   & 0.645     &0.556      &0.417      \\
            \multicolumn{2}{|c|}{${\rho}_N$}          &0.647	&0.646      &0.650	&0.636	&0.535	&0.552\\
            \hline\hline
			$\nu=0.2$& $\omega_{J,\rm opt}$          &1.30    &1.23     &1.00  &0.97      &1.14       &0.95     \\
              & $\omega_{\rm opt}$                   &0.60    & 0.62     &0.71     &0.82      &0.73       &1.07        \\\hline
            \multicolumn{2}{|c|}{$\rho_{LFA}$}       &0.660   &0.652     &0.683   &0.647      &0.568      &0.435      \\
            \multicolumn{2}{|c|}{${\rho}_N$}         &0.652   &0.620     &0.682   &0.616      &0.578      &0.418\\
            \hline\hline
			$\nu =0.4$& $\omega_{J,\rm opt}$       &1.27       &1.16      &1.17   &0.94     &1.18     &0.79   \\
                      & $\omega_{\rm opt}$         &0.69       &0.74      &0.74   & 0.74    &0.77     & 1.14       \\\hline
            \multicolumn{2}{|c|}{$\rho_{LFA}$}     &0.684      &0.663     &0.670  &0.690    &0.659    &0.509     \\
            \multicolumn{2}{|c|}{${\rho}_N$}       &0.680      &0.655     &0.659  &0.654    &0.658    &0.507  \\
            \hline\hline
			$\nu=0.45$& $\omega_{J,\rm opt}$       &1.16    &1.07       &1.10    &0.91      &1.26    &0.76      \\
                     & $\omega_{\rm opt}$          &0.72    &0.72       &0.72    &0.72      & 0.74   &1.17       \\\hline
            \multicolumn{2}{|c|}{$\rho_{LFA}$ }    &0.732   &0.732      &0.732   &0.732      & 0.732  &0.570      \\
            \multicolumn{2}{|c|}{${\rho}_N$}       &0.723	&0.723     &0.723    &0.722     &0.731    &0.567   \\
            \hline\hline
			$\nu=0.49$& $\omega_{J,\rm opt}$        &1.21      &1.27      &1.07    &1.18    &1.29      &1.06     \\
              & $\omega_{\rm opt}$                  &0.69      &0.69      &0.69    &0.69    &0.69      &1.00       \\\hline
            \multicolumn{2}{|c|}{$\rho_{LFA}$}      &0.772     &0.772     &0.772   &0.772   &0.772     &0.688     \\
            \multicolumn{2}{|c|}{${\rho}_N$}        &0.757     &0.757     &0.757   &0.757   &0.741     &0.681    \\
            \hline\hline
			$\nu =0.499$& $\omega_{J,\rm opt}$     &0.85     &1.29     &1.00    &0.86      &0.85     &1.42       \\
                    & $\omega_{\rm opt}$            &0.68     &0.68     & 0.68  &0.68      &0.68      &0.73     \\\hline
            \multicolumn{2}{|c|}{$\rho_{LFA}$}     & 0.786     &0.786   &0.786  &0.786     &0.786     &0.772       \\
            \multicolumn{2}{|c|}{${\rho}_N$}       & 0.783    &0.783    &0.783  &0.783     &0.783     &0.751 \\
			\hline
		\end{tabular}
	\end{center}
\end{table}

In~\cref{tab:vanka22,tab:bsr_vanka11}, we present LFA-optimized parameters and both LFA-predicted and numerically measured two-grid convergence factors for monolithic multigrid using Vanka (with $\nu_1=\nu_2=2$) and inexact Braess-Sarazin relaxation schemes (with $\nu_1=\nu_2=1$), respectively. To validate the parameters for inexact BSR, we first perform LFA for the exact BSR scheme discussed above (not shown here).  For values of $\nu$ larger than 0.4, we find identical performance between exact and inexact BSR, except for the case of $k=10^{-10}$, where inexact BSR slightly outperforms exact BSR for $\nu = 0.45$.  Exact BSR performance notably improves as $\nu$ decreases, achieving  convergence factors around $0.48$ for $\nu = 0$ and larger values of $k$.  While this is a slight improvement in convergence over the inexact BSR case, it relies on the prohibitively expensive exact inversion of the approximate Schur complement.  Note that we also optimize for the jacobi weight, $\omega_J$, for approximately solving the Schur complement.

In general, we see good agreement between the LFA predictions and the measured factors, and that the two-grid schemes are robust to both the incompressible limit, $\nu \rightarrow 0.5$, and extremely small values of $k$.  We note some irregularity in both the convergence factors themselves and the match between prediction and measurement in the small $k$ limit, which appears to be due to ill-conditioning of the Fourier symbols when $k$ is so small.  This also leads to some irregularity in the optimal relaxation parameters also in this limit.

In these tests, we focus on the optimization of only the outer relaxation parameter, $\omega$, using LFA.  While it is possible to introduce more relaxation parameters (e.g., in the inner Vanka relaxation for inexact BSR, or the weighting matrix, $D_\ell$), preliminary experiments showed that these did not greatly improve convergence.  It is also important to note both that the optimal relaxation parameter varies with $\nu$ and that good choices for one value of $\nu$ do not lead to good performance across all values considered here.  With Vanka relaxation, for fixed $\omega = 0.9$ (close to the optimal value for $\nu = 0$), we see divergence for all tested values of $\nu > 0.2$.  For fixed $\omega = 2/3$ (close to the optimal value for $\nu \rightarrow 1/2$), we see strong degradation in convergence as $\nu$ gets small, with divergence for all tested values of $\nu < 0.49$.  We also note that, because these relaxation weights are used in multiplicative combination with coarse-grid correction, the performance of multigrid-preconditioned FGMRES, as is considered in \Cref{sec:Numrical}, is also sensitive to these choices.

\section{Numerical Results}\label{sec:Numrical}

We now consider performance of the reduced quadrature discretization and the monolithic multigrid preconditioners, extending the two-level results shown above to the multilevel case.  To allow fair comparison between the relaxation schemes, we have implemented both Vanka and inexact BSR in a single codebase, namely the
HAZmath package~\cite{hazmath}: a simple finite element, graph, and solver library. All timed numerical results are done using a
workstation with an 8-core 3-GHz Intel Xeon Sandy Bridge CPU and 256 GB of RAM.  This also allows direct comparison to timings for the block preconditioners from \cite{adler2019robust}.

\subsection{Steady-State Model}\label{sec:comparison}
Here, we use a single four-level V-cycle of the monolithic multigrid method as a preconditioner for FGMRES using a relative residual stopping tolerance of $10^{-6}$ and compare the performance with the block upper-triangular preconditioner previously used in \cite{adler2019robust}, with form
\begin{equation}\label{upper-preconditioner}
	\mathcal{B}_U = \left(\begin{array}{ccc}
		A_{\bm u}^{\text{RQ}} & \alpha B_{\bm u}^{\top} & 0 \\
		0 & \left(\frac{\alpha^2}{\zeta^2} + \frac{1}{M}\right) M_p & -\tau B_{\bm w}  \\
		0 & 0 & \tau M_{\bm w} + \tau^2 \left(\frac{\alpha^2}{\zeta^2}+\frac{1}{M}\right)^{-1} A_{\bm w}
	\end{array}\right).
\end{equation}
Notice that \cref{upper-preconditioner} is applied to a permuted form of the discretization, as was considered in \cite{adler2019robust}.
Similar to \cite{adler2019robust},
each diagonal block in the preconditioner is solved to a relative residual tolerance of $10^{-3}$
using preconditioned FGMRES preconditioned with algebraic multigrid for the pressure and Darcy blocks and
FGMRES preconditioned with geometric multigrid using the Vanka relaxation presented in \Cref{subsec:Vanka} for the displacement block.

In this example, the right-hand side functions $\bm{g}$ and $f$ are chosen so that the exact solution is given by
\begin{eqnarray*}
	\bm{u}(x,y,t) &=& \operatorname{curl} \varphi =
	\begin{pmatrix}\partial_y \varphi \\ -\partial_x \varphi \end{pmatrix},
	\quad \varphi (x,y) = [xy(1-x)(1-y)]^2, \\
	p(x,y,t) &=& 1, \qquad \vec{w}(x,y,t) = \vec{0}.
\end{eqnarray*}
The material parameters are  the same as those used in the LFA validation above.
Finally, starting with a zero initial guess, we set $\tau = 1$ and $t_{\max{}}=1$, so that we only perform one time step,
and fix the mesh spacing to be $h=1/64$ (the four-level V-cycle has a direct solve on the coarse mesh with spacing $h=1/8$).
\cref{tab:time_all_redisc2} presents results for monolithic multigrid with both Vanka and inexact Braess-Sarazin relaxation, and for the block preconditioner.

\begin{table}[h!]
	\footnotesize
	\begin{center}
		\caption{CPU time in seconds (and iterations to convergence) for FGMRES preconditioned by monolithic multigrid with additive Vanka relaxation on the full system with the 20-DoF vertex-based patch
			(\cref{fig:patches}, right) and inexact BSR, and preconditioned by a block-upper triangular system from \cite{adler2019robust} on steady-state problem.}
		\label{tab:time_all_redisc2}
		\begin{tabular}{|l||p{0.35in} | p{0.51in} p{0.51in} p{0.51in} p{0.51in} p{0.51in} p{0.51in}|}
			\hline 
			\hspace{-4pt}Scheme\hspace{-4pt}&\backslashbox{$\nu$}{$k$} & $1$ & $10^{-2}$ & $10^{-4}$ & $10^{-6}$ & $10^{-8}$ & $10^{-10}$\\
			\hline
			\hline
			Vanka&$0.0$&2.417 (18) &1.966 (18) &2.037 (18) &2.049 (18) &1.029 (10) &1.002 (10) \\
			BSR&$0.0$&0.457 (9) &0.458 (9) &0.451 (9) &0.613 (12) &0.502 (10) &0.459 (9) \\
			Block& $0.0$  &0.733 (16) &0.738 (16) &0.845 (16) &0.611 (13) &0.499 (12) &0.415 (8) \\
			\hline
			Vanka&$0.2$&1.863 (15) &1.605 (15) &1.544 (15) &1.550 (15) &1.025 (10) &1.103 (10) \\
			BSR&$0.2$&0.615 (12) &0.561 (11) &0.667 (10) &0.556 (11) &0.454 (9) &0.507 (10) \\
			Block&	$0.2$  &0.695 (15) &0.698 (15) &0.718 (15) &0.637 (12) &0.484 (11) &0.448 (8) \\
			\hline
			Vanka&$0.4$&1.095 (9) &1.027 (9) &1.000 (9) &0.923 (9) &1.006 (9) &1.089 (10) \\
			BSR&$0.4$&0.554 (11) &0.608 (12) &0.758 (15) &0.664 (13) &1.124 (22) &0.658 (13) \\
			Block&	$0.4$  &0.784 (15) &0.785 (15) &0.834 (15) &0.850 (13) &0.596 (11) &0.516 (9) \\
			\hline
			Vanka&$0.45$&1.091 (9) &0.921 (9) &1.003 (9) &0.922 (9) &1.018 (10) &0.944 (10) \\
			BSR&$0.45$&0.658 (13) &0.844 (13) &0.921 (14) &0.763 (15) &28.5 (452) &0.819 (16) \\
			Block&$0.45$ &1.047 (15) &1.047 (15) &1.521 (16) &1.443 (15) &0.876 (12) &0.742 (11) \\
			\hline
			Vanka&$0.49$&1.347 (11) &1.142 (11) &1.121 (11) &1.128 (11) &1.125 (11) &0.957 (11) \\
			BSR&$0.49$&0.917 (18) &0.911 (18) &0.971 (19) &0.966 (19) &17.4 (299) &1.126 (22) \\
			Block&$0.49$ &1.081 (15) &1.080 (15) &1.073 (15) &1.833 (16) &1.184 (12) &1.094 (11) \\
			\hline
			Vanka&$0.499$&1.565 (13) &1.354 (13) &1.318 (13) &1.318 (13) &1.437 (13) &1.187 (14) \\
			BSR&$0.499$&1.221 (24) &1.163 (23) &1.220 (24) &1.656 (26) &1.341 (24) &2.808 (55) \\
			Block&$0.499$&2.142 (15) &2.140 (15) &2.146 (15) &2.144 (15) &2.194 (16) &2.099 (15) \\
			\hline
		\end{tabular}
		
	\end{center}
\end{table}

There are several takeaways from these results.  First, the monolithic multigrid with Vanka relaxation method is robust with respect to the physical parameters, though we do see a slight degradation for small $\nu$ (i.e., the compressible case). This is not surprising, as the methods developed here were developed specifically for the limit as $\nu$ approaches $1/2$.  Secondly, the multilevel monolithic multigrid with inexact BSR relaxation struggles when the permeability constant, $k$ is small, in contrast to the robust two-level results in  \Cref{sec:LFA}.  It may be that W-cycles, or other approaches, are needed to achieve robustness in this case, but we do not investigate this here.  Note, however, that for larger $k$, the total computational time when using inexact BSR is slightly faster than that for Vanka relaxation.
Comparing the monolithic multigrid performance with that of the block preconditioner, we see that the block preconditioner performance is similar to that of the monolithic multigrid with inexact Braess-Sarazin relaxation for small $\nu$.
However, there is a clear degradation in performance of the block preconditioner in the incompressible limit, where monolithic multigrid is more robust.  Since the degradation in CPU time is much worse than that in iteration count, we infer that the required iterations of the inner (block) solvers must be increasing in this limit.

\begin{figure}[h!]
	\caption{Convergence study for steady-state problem, using FGMRES preconditioned by monolithic multigrid with the additive Vanka relaxation scheme using $\nu=0.4$ and $\nu=0.499$.  Left: $H^1$-seminorm error for displacement vs. mesh size. Right: $L^2$-error for pressure vs. mesh size.}
	\includegraphics{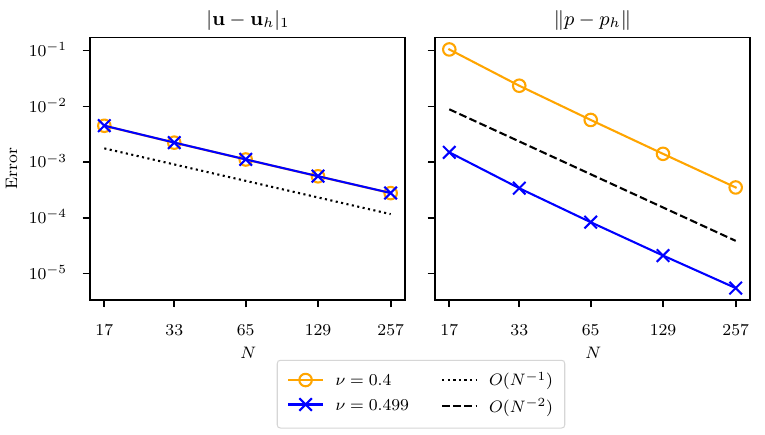}
	\label{fig:mgtime}
\end{figure}

To verify that the reduced quadrature formulation does accurately approximate the problem, we perform a convergence study of the finite-element discretization with respect to the mesh size, given by $h = 1/(N-1)$, where $N$ is the number of vertices in each dimension.
Here, we set $\tau = 1.0$ and $k = 10^{-6}$ as an example, with results shown in \cref{fig:mgtime} for $\nu=0.4$ and $\nu=0.499$.
The displacement displays a first-order convergence with respect to the $H^1$-seminorm, 
as expected, with no difference in error values for the different values of $\nu$.
The pressure displays second-order convergence despite only using P0 elements, with slight improvement as $\nu\rightarrow 0.5$.  This superconvergence is due to having a very smooth solution ($p$ is a constant) and using a uniform mesh.
Additionally, \cref{fig:mgtime_Nscaling} shows that the monolithic multigrid approach (with exact solve on a coarsest mesh with $h=1/8$) with either the
Vanka or inexact BSR relaxation methods follows the expected $O( N^2 \log(N^2))$ scaling in CPU time
with respect to problem size, even as $\nu\rightarrow 0.5$.  We note that while iterations to convergence are independent of problem size for both values of $\nu$ when using Vanka relaxation, degradation to $O(\log(N^2))$ iterations is seen for BSR relaxation as $\nu\rightarrow 0.5$.
These results further indicate the better performance of Vanka as $\nu \rightarrow 0.5$, and the loss of robustness in $\nu$ for multilevel BSR.

\begin{figure}[h!]
	\footnotesize
	\centering
	\caption{CPU time in seconds (at left) and iterations to
		convergence (at right) for FGMRES preconditioned by a monolithic multigrid V-cycle with additive Vanka and inexact BSR relaxation schemes versus mesh size on steady-state problem for $\nu=0.4$ and $\nu=0.499$.}
	\includegraphics{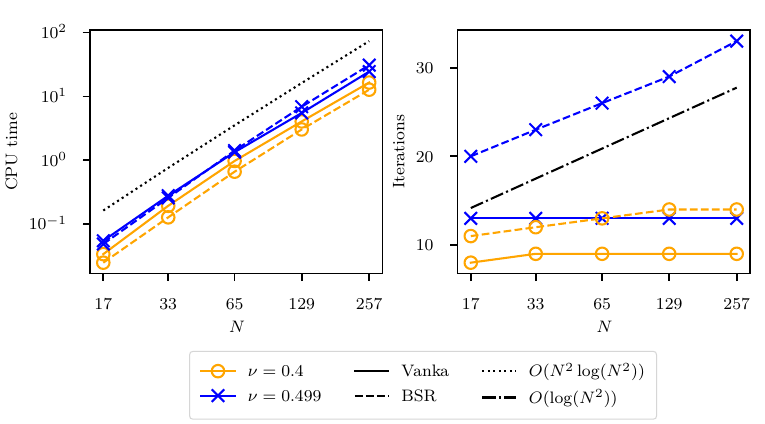}
	\label{fig:mgtime_Nscaling}
\end{figure}

\subsection{Smooth Test Problem}
We next consider a slightly more realistic test problem, now with a time-dependent smooth solution, taken from \cite{fu2019-aa}.  The manufactured solution is defined on $\Omega = [0,1]^2$, as
\begin{align*}
	\vec{u}(x,y,t) & = e^{-t}\left ( \begin{array}{c}\sin(\pi y) \left ( -\cos(\pi x) + \frac{1}{\mu + \lambda}\sin(\pi x) \right )\\
		\sin(\pi x) \left ( \cos(\pi y) + \frac{1}{\mu + \lambda}\sin(\pi y) \right )\\\end{array} \right ),\\
	p(x,y,t) & = e^{-t}\sin(\pi x)\sin(\pi y),\\
	\vec{w}(x,y,t) &=-k\nabla p,
\end{align*}
with right-hand sides chosen appropriately.  We consider Dirichlet boundary conditions on all sides for displacement and pressure.  The physical parameters are $\alpha=1$, $\mu_f = 1$, $M=10^6$, and $E= 3\times10^4$.
We perform all simulations from time $t=0$ to $t=0.5$.  Here, we use a relative residual stopping tolerance for FGMRES of $10^{-10}$, as preliminary experiments showed that this was needed to accurately resolve the pressure solution.  Moreover, we only consider the additive Vanka method, as it proved more robust in the multilevel setting.  Additionally, as we are mostly concerned with the incompressible limit, we focus on values of the Poisson ratio above 0.4.

Parameter robustness for the solver is demonstrated in Table~\ref{tab:easy_param},
showing the average solver iteration count and average CPU time over 64 time steps with time-step size, $\tau = 1/128$.
The mesh spacing is fixed to $h=1/64$, and the values of $k$ and $\nu$ are varied.
Robustness with respect to discretization parameters, $h$ and $\tau$, is shown in \cref{fig:easy_time}, for $k=10^{-6}$ and both $\nu=0.4$ and $\nu = 0.499$.  We test on meshes with $N=2^\ell+1$, for $\ell = 4$ to $8$, with $\tau = 2^{-m}$ for $m=4$ to $8$.  Here, we see nearly identical CPU times with expected $O(N^2\log(N^2))$ scaling for all values of $\tau$.  The corresponding LFA parameters from
	\cref{tab:vanka22,tab:bsr_vanka11} are used. The averaged iteration counts (not shown here) remain consistently in the range of 13 to 16 across all parameter values.
\begin{table}[h]
	\footnotesize
	\begin{center}
		\caption{Average CPU time in seconds (iterations) over 64 time steps for FGMRES preconditioned by monolithic multigrid with an additive Vanka relaxation scheme for the smooth solution problem with varying physical parameters $k$ and $\nu$.}
		\label{tab:easy_param}
		\begin{tabular}{|l | l l l l l l|}
			\hline
			\backslashbox{$\nu$}{$k$} & $1$ & $10^{-2}$ & $10^{-4}$ & $10^{-6}$ & $10^{-8}$ & $10^{-10}$\\
			\hline\hline
			$0.4$&1.536 (14.0) &1.521 (14.0) &1.543 (14.0) &1.520 (14.0) &1.679 (16.0) &1.506 (14.0) \\
			$0.45$&1.431 (13.0) &1.402 (13.0) &1.404 (13.0) &1.525 (14.0) &1.441 (14.0) &1.467 (14.0) \\
			$0.49$&1.536 (14.0) &1.518 (14.0) &1.513 (14.0) &1.513 (14.0) &1.344 (14.0) &1.294 (14.0) \\
			$0.499$&1.663 (15.0) &1.628 (15.0) &1.622 (15.0) &1.620 (15.0) &1.312 (14.0) &1.331 (15.0) \\
			\hline
		\end{tabular}
	\end{center}
\end{table}
\begin{figure}[h]
	\footnotesize
	\begin{center}
		\caption{Average CPU time in seconds for FGMRES preconditioned by monolithic multigrid for the smooth solution problem with $k=10^{-6}$ and $\nu=0.4$ (at left) and $0.499$ (at right) with varying discretization parameters, $N$ and $\tau$.}
		\includegraphics{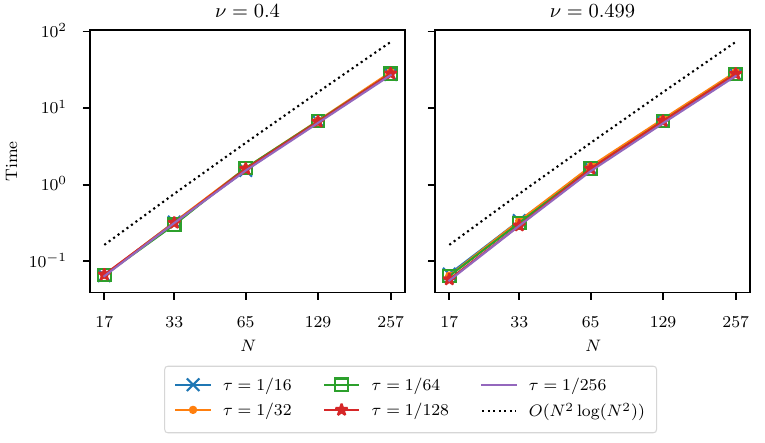}
		\label{fig:easy_time}
	\end{center}
\end{figure}

Again, to validate the discretization, we show finite-element convergence with respect to mesh size and time-step size in \cref{fig:easy_Nconv}, fixing $h=\tau$, with $k=10^{-6}$ and both $\nu=0.4$ and $\nu=0.499$.  Expected $O(h+\tau)$ convergence is seen for both the $H^1$-seminorm of $\vec{u}$ and the $L^2$ norm of $p$.

\begin{figure}[htp]
	\caption{Convergence study for smooth solution problem, using FGMRES preconditioned by monolithic multigrid with the additive Vanka relaxation scheme for $\nu=0.4$ and $\nu=0.499$ and $\tau=h=\frac{1}{N-1}$.  Left: $H^1$-seminorm error for displacement versus mesh size. Right: $L^2$-error for pressure versus mesh size.}
	\includegraphics{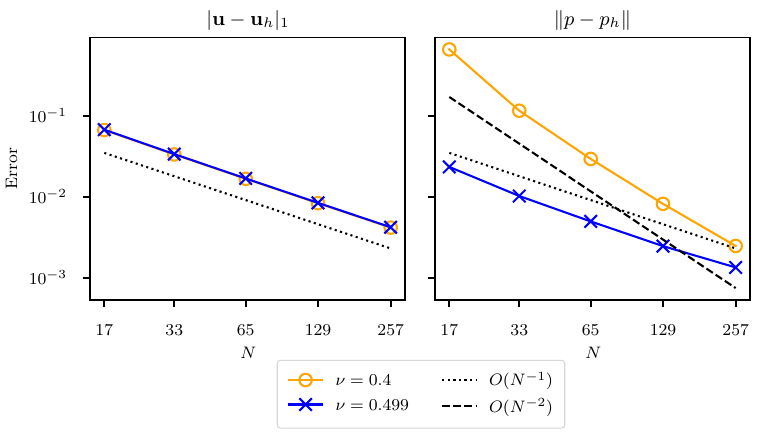}
	\label{fig:easy_Nconv}
\end{figure}

\subsection{Terzaghi's Problem} Finally, we consider a standard benchmark in poroelasticity.
The Terzaghi consolidation problem models a fluid-saturated column of a poroelastic material subject to a loading force on the top \cite{terzaghi,terzaghi2}; the cylinder height and width are $1.0$, so, once again we take $\Omega = [0,1]^2$.
The physical parameters are $\alpha=1$, $\mu_f = 1$, and $E=3 \times 10^4$,
but we take $M = \infty$ as the Biot modulus.  This means that the diagonal block of $\mathcal{A}^{RQ}$ corresponding to the pressure is zero, resulting in vertex-based Vanka blocks that are difficult to invert. To resolve this, a small positive weight of $10^{-8}$, is added to the diagonal of the Vanka blocks. This test problem has an analytical solution defined by an infinite series,
\begin{align*}
	\vec{u}(x,y,t) & = \frac{p_0}{\lambda + 2\mu}\left ( \begin{array}{c} 1 - x - \sum\limits_{i=0}^{\infty} \frac{8}{\pi^2}\frac{1}{(2i+1)^2}e^{-(2i+1)^2\pi^2k(\lambda+2\mu)t/4}\cos\left(\frac{(2i+1)\pi x}{2}\right)\\
		0\\\end{array} \right ),\\
	p(x,y,t) & = \frac{4p_0}{\pi}\sum\limits_{i=0}^{\infty}\frac{1}{(2i+1)}e^{-(2i+1)^2\pi^2k(\lambda+2\mu)t/4}\cos\left(\frac{(2i+1)\pi x}{2}\right),\\
	\vec{w}(x,y,t) &=-k\nabla p,
\end{align*}
with initial conditions, $\vec{u}(x,y,0) = \vec{0}$ and $p(x,y,0) = p_0 = 1.0$.  The problem is designed to have $\vec{0}$ as the right-hand side.

Parameter robustness for the monolithic multigrid solver is demonstrated in \cref{tab:terzaghi_paramscale},
showing the average solver FMGRES iteration count and average CPU time over 10 time steps using the additive Vanka relaxation applied to the whole system with the 20-DoF vertex-based patch (\cref{fig:patches}, right).  A relative residual stopping tolerance of $10^{-6}$ is used, with mesh spacing fixed to $h=1/64$, and the values of $k$ and $\nu$ are varied.
Due to the wide range of physical parameters considered,
there is no reasonable single time-step size for use with all parameter combinations.  Thus, we determine a parameter-dependent time scale,
$\hat{\tau} = \frac{1}{0.25 \pi^2 k (\lambda + 2\mu)}$, derived from the form of the time-dependence in the analytical solution.  All tests below simulate from time $t=0$ to $t = \hat{\tau}/10$.  Note, that this physical time-step size can vary over several orders of magnitude as we vary $k$ and $\nu$.  Thus, the optimal parameters for the steady-state model problem may not be suitable here, so these parameters were recomputed for the Terzaghi problem.  Additionally, large values of permeability are not realistic for this type of test problem, so we only consider values of $k$ in the range $10^{-4}$ to $10^{-10}$.  The results in \cref{tab:terzaghi_paramscale} highlight the robustness of the monolithic multigrid method as well as the utility of the LFA relaxation parameter predictions.

\begin{table}[h!]
	\footnotesize
	\begin{center}
		\caption{CPU time in seconds (iterations) per time step, averaged over 10 time steps with $\tau = \hat{\tau}/100$ and $h=1/64$ for FGMRES preconditioned by monolithic multigrid with additive Vanka relaxation for the Terzaghi problem with varying physical parameters $k$ and $\nu$.}
		\label{tab:terzaghi_paramscale}
		\begin{tabular}{|l | l l l l|}
			\hline\rule{0pt}{1.0\normalbaselineskip} 
			\backslashbox{$\nu$}{$k$} & $10^{-4}$ & $10^{-6}$ & $10^{-8}$ & $10^{-10}$\\
			\hline \hline
			$0.4$&0.796 (7.3) &0.803 (7.3) &0.796 (7.3) &0.802 (7.3) \\
			$0.45$&0.827 (7.6) &0.828 (7.6) &0.828 (7.6) &0.822 (7.6) \\
			$0.49$&0.903 (8.7) &0.900 (8.7) &0.898 (8.7) &0.903 (8.7) \\
			$0.499$&1.378 (14.5) &1.367 (14.5) &1.376 (14.5) &1.372 (14.5) \\
			\hline
		\end{tabular}
	\end{center}
\end{table}

In \cref{fig:terzaghi_time}, we explore the robustness with respect to the time-step size, $\tau$, and mesh size (number of points in one direction), $N$, with $h=1/(N-1)$, for $k=10^{-6}$ and both $\nu=0.4$ and $\nu=0.499$, for monolithic multigrid using Vanka relaxation  as described above.  Note that with a smaller time-step size, more time steps are needed to get to the same final time.  In all cases, the iteration counts remain stable (no worse than $O(\ln(N^2))$), and the computational time scales as $O(N^2\log(N^2))$.

\begin{figure}[h!]
	\footnotesize
	\begin{center}
		\caption{Average CPU time in seconds (top) and iterations to convergence (bottom) per time-step over time interval $[0,\hat{\tau}/10]$ for FGMRES preconditioned by monolithic multigrid with additive Vanka relaxation for the Terzaghi problem. Here, $k=10^{-6}$ and $\nu=0.4$ (left) and $0.499$ (right).}
		\includegraphics{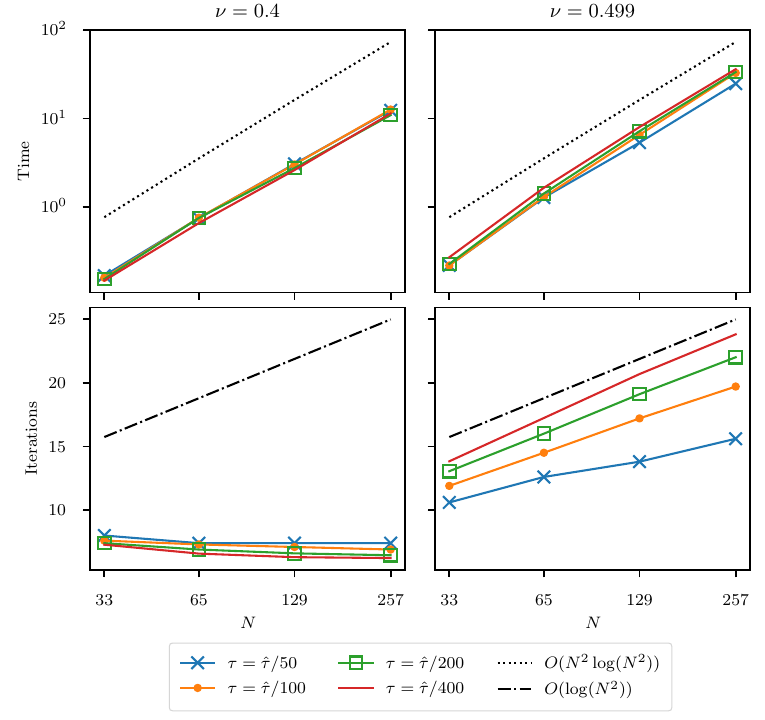}
		\label{fig:terzaghi_time}
	\end{center}
\end{figure}

\section{Conclusions}\label{sec:Summary}
In this paper, we investigate the construction of parameter-robust preconditioners for three-field models of Biot poroelasticity.  Following \cite{rodrigo2018new, adler2019robust}, we consider a bubble-enriched P1-RT0-P0 finite-element discretization; however, in order to allow for robust solvers, we introduce a reduced quadrature approximation and show that the discretization quality does not suffer from this change.  With this, and suitable treatment of divergence-free displacements in both the relaxation and interpolation operators, we derive robust monolithic multigrid methods to solve this problem, with both Vanka and inexact Braess-Sarazin relaxation schemes.  In numerical tests, we see that the additive form of Vanka relaxation is more robust than inexact Braess-Sarazin.  Both approaches outperform the block-triangular preconditioner of \cite{adler2019robust}, particularly in the incompressible limit. Improving robustness of inexact Braess-Sarazin relaxation in the small permeability and nearly incompressible limits is an interesting question for future work.

Another natural topic for future work is extending the preconditioners developed here for the ``bubble-eliminated'' system described in \cite{rodrigo2018new, adler2019robust}, where an approximate Schur complement is used to remove the face-based displacement DoFs.  Additionally, more complicated
models of poroelasticity will be considered, including their implementation for three-dimensional models, and for nonlinear models that describe porous materials with fractures, see~\cite{BudisaHu2019,FlemischBerreBoonFumagalliSchwenckScottiStefanssonTatomir2018,FlemischFumagalliScotti2016,NordbottenBoonFumagalliKeilegavlen2018} and references therein.  Developing robust multigrid solvers for the linearizations of these systems will aid in the development of fast simulations for real-world problems in the geosciences and biomedical research.
 
 \appendix
 
 \section{LFA for the Reduced-Quadrature Discretization}\label{secSM:LFA-RQD}
 Consider the discretization matrix, $\mathcal{A}^{\text{RQ}}$, from \cref{block_form_rq},
 \begin{equation*}
 	\mathcal{A}^{\text{RQ}}= \left(
 	\begin{array}{ccc}
 		A_{\bm{u}}^{\text{RQ}} &0                      & \alpha B_{\bm{u}}^{\top} \\
 		0                      & \tau M_{\bm w}       & \tau B_{\bm w}^{\top} \\
 		\alpha B_{\bm u}      & \tau B_{\bm w}       & -\frac{1}{M} M_p
 	\end{array}
 	\right).
 \end{equation*}
 As discussed in \Cref{subsec:LFA-Biot}, the calculation of the block symbol of $\mathcal{A}^{\text{RQ}}$ is complicated because of both the different discretization spaces used for $\bm{u}$, $\bm{w}$, and $p$ and the different basis functions used within each of these spaces.  By rewriting $\mathcal{A}^{\text{RQ}}$ in $10\times 10$ block form, we can expose Toeplitz structure within each block, identifying each block in the system with one ``type'' of basis function used in the discretization.  The same approach was used, for example, in \cite{HMFEMStokes,HM2018LFALaplace} to define LFA representations of similarly structured finite-element discretizations of the Laplacian and Stokes operators.  In all that follows, we consider a uniform mesh of the unit square domain, constructed by partitioning the domain into square elements that are each then cut once diagonally (from top left to bottom right) to form a triangulation of the domain.
 
 We first consider the diagonal displacement operator, $A_{\bm{u}}^{\text{RQ}}$, noting that there are five distinct types of basis functions used for $\bm{u}$ in the discretization, leading to $5\times 5$ block structure of its LFA symbol.  These basis functions are the two P1 components of the displacement, along with the three face-based DoFs for the bubble functions.  To give the LFA representation of this operator, we first write its stencil in terms of these basis functions, then use the techniques of  \cite{HMFEMStokes,HM2018LFALaplace} to compute the Fourier symbols.
 
 Recall that we can separate the reduced-quadrature displacement operator into two terms,
 \begin{equation*}
 	A^{\text{RQ}}_{\bm{u}}=2\mu A_{\varepsilon} + \lambda B_{\bm{u}}^{\top} M_p^{-1} B_{\bm{u}},
 \end{equation*}
 where $2\mu A_{\varepsilon}$ corresponds to the weak form, $2\mu\left(
 \varepsilon(\bm{u}),\varepsilon(\bm{v})\right)$, and the second term is the
 reduced-quadrature discretization of the grad-div operator.  We
 separately compute LFA symbols for each of these terms, noting that we
 can write the symbol for the second term as a product of symbols for
 its component parts (see, for example,
 \cite{KKahl_NKintscher_2020a}), which are needed elsewhere in the
 symbol for $\mathcal{A}^{\text{RQ}}$.
 
 We begin by considering the stencil for $2\mu A_\varepsilon$ in three pieces.
 \cref{bubble-stencil-plot} shows two stencils for $2\mu A_{\varepsilon}$, corresponding to the face-based displacement DoFs along the diagonal edges (left) and the horizontal edges (right).  This figure shows only connections between the bubble DoFs. Connections between bubble and P1 DoFs are discussed below.  The stencil for the vertical edges is obtained by a rotation and reflection of that shown for horizontal edges.
 The stencils for the connections between P1 DoFs of the same type naturally have a five-point structure due to symmetry.  For the P1 $x$-component of displacement, the stencil is
 \begin{equation*}
 	\begin{bmatrix}
 		&     -\mu    &         \\
 		-2\mu&     6\mu&      -2\mu   \\
 		&     -\mu      &
 	\end{bmatrix},
 \end{equation*}
 with a $90^\circ$ rotation for the P1 $y$-component.
 The stencil between the two P1 components of the displacement is given by
 \begin{equation*}
 	\begin{bmatrix}
 		\frac{\mu}{2} &     -\frac{\mu}{2}    &         \\
 		-\frac{\mu}{2}&     \mu&      -\frac{\mu}{2}  \\
 		&    -\frac{\mu}{2}   &    \frac{\mu}{2}
 	\end{bmatrix}.
 \end{equation*}
 Finally, connections between the P1 $x$-component of the displacement and the bubble DoFs are shown at left of \cref{fig:P1-bubble-stencil}, while those between the P1 $y$-component of the displacement and the bubble DoFs are shown at right.  Connections between the bubble DoFs and the P1 components of the displacement are transposes of these connections.
 \begin{figure}[htp]
 	\centering
 	\begin{tikzpicture}[font=\large]
 		\draw [-] [thick] (0,0) --(4,0);
 		\draw [-] [thick] (0,0) --(0,4);
 		\draw [-] [thick] (4,0) --(4,4);
 		\draw [-] [thick] (0,4) --(4,4);
 		\draw [-] [thick] (0,4) --(4,0);
 		\node at (2,0) {$\bullet$};  \node at (2,0.5) {\large{$\frac{-5\sqrt{2}\mu}{3}$}};
 		\node at (2,4) {$\bullet$};  \node at (2,4.5) {\large{$\frac{-5\sqrt{2}\mu}{3}$}};
 		
 		\node at (0,2) {$\bullet$};  \node at (-0.65,2) {\large{$\frac{-5\sqrt{2}\mu}{3}$}};
 		\node at (4,2) {$\bullet$};  \node at (4.65,2.) {\large{$\frac{-5\sqrt{2}\mu}{3}$}};
 		\node at (2,2) {$\bullet$};   \node at (2,2.5) {\large{$\frac{28\mu}{3}$}};
 	\end{tikzpicture}
 	\hspace{4mm}
 	\begin{tikzpicture}
 		\draw [-] [thick] (0,2) --(4,2);
 		\draw [-] [thick] (0,2) --(0,4);
 		\draw [-] [thick] (0,4) --(4,2);
 		\draw [-] [thick] (4,0) --(4,2);
 		\draw [-] [thick] (0,2) --(4,0);
 		
 		\node at (2,2) {$\bullet$};  \node at (2,1.7) {{$8\mu$}};
 		\node at (0,3) {$\bullet$};  \node at (0.25,3) {\large{$\frac{2\mu}{3}$}};
 		
 		\node at (4,1) {$\bullet$};  \node at (4.25,1) {\large{$\frac{2\mu}{3}$}};
 		\node at (2,1) {$\bullet$};   \node at (1.8,0.6) {\large{$\frac{-5\sqrt{2}\mu}{3}$}};
 		\node at (2,3) {$\bullet$};  \node at (2.2,3.3) {\large{$\frac{-5\sqrt{2}\mu}{3}$}};
 		
 	\end{tikzpicture}
 	\caption{Stencils for the bubble DoFs in $2\mu A_\varepsilon$. Left: stencil associated with the diagonal edges. Right: stencil associated with the horizontal edges.}
 	\label{bubble-stencil-plot}
 \end{figure}
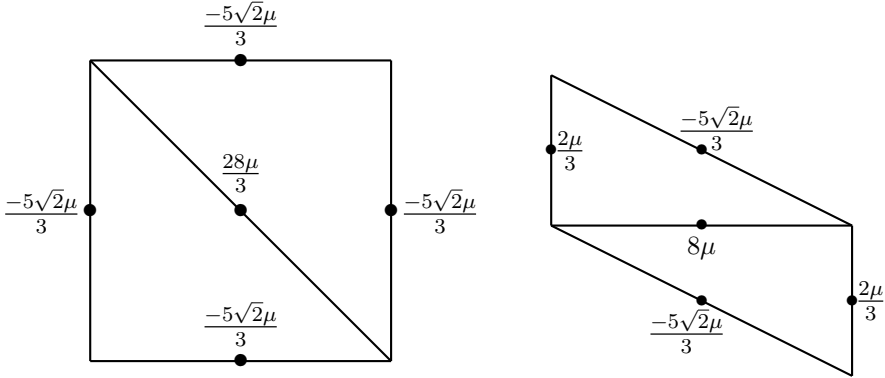
 \begin{figure}[htp]
 	\centering
 	\begin{tikzpicture}[font=\normalsize]
 		\draw [-] [thick] (2,0) --(4,0); \draw [-] [thick] (0,4) --(2,4);
 		\draw [-] [thick] (0,2) --(0,4); \draw [-] [thick] (4,0) --(4,2);
 		\draw [-] [thick] (0,2) --(2,0);  \draw [-] [thick] (0,4) --(4,0);
 		\draw [-] [thick] (2,4) --(4,2); \draw [-] [thick] (0,2) --(4,2);
 		\draw [-] [thick] (2,0) --(2,4);
 		
 		\node at (1,3) {$\bullet$};  \node at (1.45,3.1) {\large{$\frac{4\sqrt{2}\mu}{3}$}};
 		\node at (3,3) {$\bullet$};  \node at (3.6,3) {\large{$\frac{-4\sqrt{2}\mu}{3}$}};
 		\node at (1,1) {$\bullet$};  \node at (0.3,1.) {\large{$\frac{-4\sqrt{2}\mu}{3}$}};
 		\node at (3,1) {$\bullet$};  \node at (3.45,1.1) {\large{$\frac{4\sqrt{2}\mu}{3}$}};
 		
 		\node at (0,3) {$\bullet$};  \node at (-0.5,3) {\large{$\frac{-4\mu}{3}$}};
 		\node at (2,3) {$\bullet$};  \node at (2.3,3) {\large{$\frac{4\mu}{3}$}};
 		\node at (2,1) {$\bullet$};  \node at (2.3,1) {\large{$\frac{4\mu}{3}$}};
 		\node at (4,1) {$\bullet$};  \node at (4.4,1) {\large{$\frac{-4\mu}{3}$}};
 	\end{tikzpicture}
 	\hspace{4mm}
 	\begin{tikzpicture}[font=\normalsize]
 		\draw [-] [thick] (2,0) --(4,0); \draw [-] [thick] (0,4) --(2,4);
 		\draw [-] [thick] (0,2) --(0,4); \draw [-] [thick] (4,0) --(4,2);
 		\draw [-] [thick] (0,2) --(2,0);  \draw [-] [thick] (0,4) --(4,0);
 		\draw [-] [thick] (2,4) --(4,2); \draw [-] [thick] (0,2) --(4,2);
 		\draw [-] [thick] (2,0) --(2,4);
 		
 		\node at (1,3) {$\bullet$};  \node at (1.4,3.) {\large{$\frac{4\sqrt{2}\mu}{3}$}};
 		\node at (3,3) {$\bullet$};  \node at (3.6,3) {\large{$\frac{-4\sqrt{2}\mu}{3}$}};
 		\node at (1,1) {$\bullet$};  \node at (0.3,1) {\large{$\frac{-4\sqrt{2}\mu}{3}$}};
 		\node at (3,1) {$\bullet$};  \node at (3.5,1) {\large{$\frac{4\sqrt{2}\mu}{3}$}};
 		
 		\node at (1,2) {$\bullet$};  \node at (1,2.4) {\large{$\frac{4\mu}{3}$}};
 		\node at (1,4) {$\bullet$};  \node at (1,3.65) {\large{$\frac{-4\mu}{3}$}};
 		\node at (3,0) {$\bullet$};  \node at (3,0.4) {\large{$\frac{-4\mu}{3}$}};
 		\node at (3,2) {$\bullet$};  \node at (3,2.4) {\large{$\frac{4\mu}{3}$}};
 	\end{tikzpicture}
 	\caption{Connections between the P1 DoFs and the bubble DoFs in $2\mu A_\varepsilon$.  At left, the P1 $x$-component of the displacement stencil to bubble DoFs.  At right, the P1 $y$-component of the displacement stencil to bubble DoFs.}
 	\label{fig:P1-bubble-stencil}
 \end{figure}
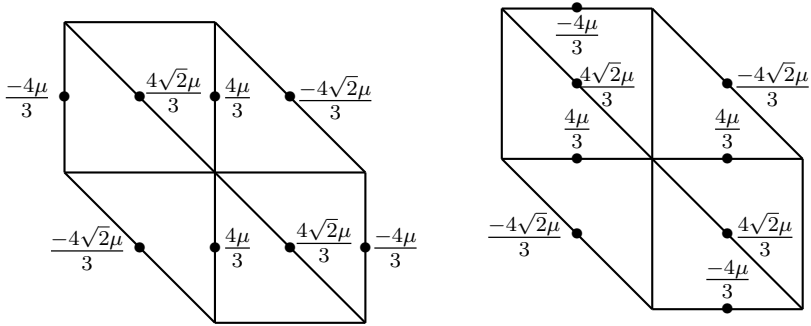

 Symbols for these pieces of $2\mu A_\varepsilon$ are then assembled using standard techniques.  The diagonal components of the symbol are directly calculated using \cref{formulation-symbol-classical} (trivially so for the bubble DoFs, where the diagonal blocks are themselves diagonal matrices).  For the off-diagonal entries, proper treatment of the non-collocated nature of the DoFs in the finite-element discretization is necessary \cite{HMFEMStokes, HM2018LFALaplace}.  Here, we base the Fourier symbols on the offset in DoF positions on the mesh; that is, when we consider the prototypical Fourier basis functions, $\varphi(\bm{\theta},\bm{x})= e^{\iota\bm{\theta}\cdot\bm{x}/{h}}$, we note that the position on the mesh, $\bm{x}$, plays an important role in the definition of the basis.  When considering two different types of DoFs, located at different positions on the mesh, the classical symbol definition hides the fact that we may use different sets of DoF locations for the domain and range of an off-diagonal block.  In essence, this comes down to the set $\bm{S}$ in \cref{formulation-symbol-classical}.  If we consider operator $L_h$ to be the off-diagonal block in the block-row of $A_\varepsilon$ corresponding to DoF-type 1 and the block column corresponding to DoF-type 2, we have
 \[
 \left(L_h\varphi(\bm{\theta},\cdot)\right)(\bm{x}_1) = \sum_{\bm{\kappa}\in\bm{S}}s_{\bm{\kappa}}e^{\iota \bm{\theta}\cdot(\bm{x}_1 + \bm{\kappa}h)/h}.
 \]
 For the right-hand side to be well-defined, we need $\bm{x}_1 + \bm{\kappa}h$ to correspond to a point, $\bm{x}_2$, on the mesh of DoF-type 2.  This necessarily changes the set $\bm{S}$ from being a subset of $\mathbb{Z}^2$ to being one that accounts for the offset between the two DoF types, accounting for fractional $h$ values in $\bm{\kappa}$.  Here, we identify the horizontal-edge face bubble DoF as having offset $(h/2,0)$ from the nodal P1 DoFs, the vertical-edge face bubble DoFs as having offset $(0,h/2)$ from the nodes, and the diagonal-edge face bubble DoFs as having offset $(h/2,h/2)$ from the nodes.  Accounting for these offsets gives the symbol for the bubble-bubble DoF connections (ordered as diagonal, horizontal, and vertical edges),
 \[
 \mu\begin{pmatrix}
 	\frac{28}{3}&  -\frac{10\sqrt{2}}{3}\cos(\frac{\theta_1}{2})&    -\frac{10\sqrt{2}}{3}\cos(\frac{\theta_2}{2})\\
 	-\frac{10\sqrt{2}}{3}\cos(\frac{\theta_1}{2})&    8&  \frac{4}{3}\cos(\frac{\theta_1-\theta_2}{2})&\\
 	-\frac{10\sqrt{2}}{3}\cos(\frac{\theta_2}{2})&   \frac{4}{3}\cos(\frac{\theta_1-\theta_2}{2})&  8
 \end{pmatrix},
 \]
 the symbol for the P1-P1 DoF connections,
 \[
 \mu\begin{pmatrix}
 	6-4\cos(\theta_1)-2\cos(\theta_2)& 1-\cos(\theta_1)-\cos(\theta_2)+\cos(\theta_1-\theta_2)\\
 	1-\cos(\theta_1)-\cos(\theta_2)+\cos(\theta_1-\theta_2)&  6-4\cos(\theta_2)-2\cos(\theta_1)
 \end{pmatrix},
 \]
 and the symbol for the contributions from the bubble DoFs to the P1 DoFs,
 \[
 \frac{8\mu}{3}\hspace{-1mm}\begin{pmatrix}
 	\sqrt{2}\big(\cos(\frac{\theta_1-\theta_2}{2})-\cos(\frac{\theta_1+\theta_2}{2})\big)& \cos(\frac{\theta_2}{2})\hspace{-0.8mm}-\hspace{-0.8mm}\cos(\theta_1-\frac{\theta_2}{2})& 0\\
 	\sqrt{2}\big(\cos(\frac{\theta_1-\theta_2}{2})-\cos(\frac{\theta_1+\theta_2}{2})\big)& 0 & \cos(\frac{\theta_1}{2})-\cos(\theta_2-\frac{\theta_1}{2})
 \end{pmatrix},
 \]
 with a transpose of this symbol for contributions from P1 DoFs to bubble DoFs.

 Similar calculations follow for the stencils and symbols of the other terms in $\mathcal{A}^{\text{RQ}}$.  For $M_{\bm{w}}$, we make use of the same adjustments to account for the staggering of the face-based RT0 DoFs, leading to the symbol,
 \begin{equation*}
 	\widetilde{ M}_{\boldsymbol{\omega}}(\theta_1,\theta_2) =\frac{\mu_fh^2}{3k}\begin{pmatrix}
 		2&    0&   0\\
 		0&     2&  -\cos(\frac{\theta_1-\theta_2}{2})\\
 		0&  -\cos(\frac{\theta_1-\theta_2}{2})&  2
 	\end{pmatrix}.
 \end{equation*}
 For the P0 discretization of pressure, we have 2 types of DoFs, associated with the lower-left and upper-right triangles when the quadrilateral mesh is cut into triangles.  Since the mass matrix is diagonal, we have the symbol,
 \begin{equation*}
 	\widetilde{ M}_p(\theta_1,\theta_2) = \frac{h^2}{2}\begin{pmatrix} 1 &0 \\0 & 1 \end{pmatrix}.
 \end{equation*}
 We then write $B_{\bm u}$ as a $2\times 5$ system of operators with symbol,
 \begin{equation*}
 	h \begin{pmatrix}
 		-\frac{2\sqrt{2}}{3}&     \frac{2}{3}e^{\frac{-\iota\theta_1}{2}}& \frac{2}{3}e^{\frac{-\iota\theta_2}{2}} &   \frac{-1}{2}\left(e^{\frac{\iota\theta_1}{2}}-e^{\frac{-\iota\theta_1}{2}}\right)e^{\frac{-\iota\theta_2}{2}}&   \frac{-1}{2}\left(e^{\frac{\iota\theta_2}{2}}-e^{\frac{-\iota\theta_2}{2}}\right)e^{\frac{-\iota\theta_1}{2}} \\
 		\frac{2\sqrt{2}}{3}&     -\frac{2}{3}e^{\frac{\iota\theta_1}{2}}& -\frac{2}{3}e^{\frac{\iota\theta_2}{2}} & \frac{-1}{2}\left(e^{\frac{\iota\theta_1}{2}}-e^{\frac{-\iota\theta_1}{2}}\right)e^{\frac{\iota\theta_2}{2}}&   \frac{-1}{2}\left(e^{\frac{\iota\theta_2}{2}}-e^{\frac{-\iota\theta_2}{2}}\right)e^{\frac{\iota\theta_1}{2}}
 	\end{pmatrix}.
 \end{equation*}
 Similarly, ${B}_{\bm w}$ is a $2\times 3$ block operator with symbol,
 \begin{equation*}
 	\widetilde{B}_{\bm w}(\theta_1,\theta_2)= h\begin{pmatrix}
 		-\sqrt{2}&     e^{-\frac{\iota\theta_1}{2}}&  e^{-\frac{\iota\theta_2}{2}}\\
 		\sqrt{2}&      -e^{\frac{\iota\theta_1}{2}}&  -e^{\frac{\iota\theta_1}{2}}
 	\end{pmatrix}.
 \end{equation*}
 From the symbols for $M_p$ and $B_{\bm u}$, we can compute the rest of the symbol for $A_{\bm u}^{\text{RQ}}$.  Taking transposes for the off-diagonal connections gives the rest of the $10\times 10$ block symbol of $\mathcal{A}^{\text{RQ}}$.
 %
 \section{LFA Representation of Grid-transfer Operators}\label{secSM:LFA-transfer-operator}
 As discussed in \Cref{sec:mmg}, we use the standard finite-element interpolation operators for $\bm{w}$ and $p$ and the modified (divergence-preserving) interpolation operator for $\bm{u}$ (see \Cref{sec:Pdiv}).  We use their transposes for restriction.  We compute symbols for the restriction operators, with those for interpolation determined as the scaled transposes, $\widetilde{P}(\bm{\theta}) = \frac{1}{4}\widetilde{R}(\bm{\theta})^\top$, in the standard way for finite-element discretizations \cite{MR1807961}.  As above, the calculation of these symbols is complicated by the staggered locations of the finite-element DoFs.
 
 Consider an arbitrary restriction operator for a scalar function (e.g., discretized in P1) characterized by a constant coefficient
 stencil, $R_h\overset{\wedge}{=} [r_{\bm{\kappa}}]$. Then, an infinite grid function $w_{h}: \mathbf{G}_h \rightarrow \mathbb{R}$ (or $\mathbb{C}$) is
 transferred to the coarse grid, $\mathbf{ G}_{2h}$, as
 \begin{eqnarray*}
 	( R_h w_{h})(\bm{ x})&=&\sum_{\kappa\in{W}}r_{\bm{\kappa}}w_{h}(\bm{ x}+\bm{\kappa} h),\,\,\bm{ x}\in \mathbf{G}_{2h},
 \end{eqnarray*}
 where $W$ is a finite subset of $\mathbb{Z}^2$ describing the stencil $[r_{\bm{\kappa}}]_h$.
 
 Given a low-frequency $\bm{\theta}^{(0,0)}$ with harmonic modes $\bm{\theta^{\alpha}}$ and taking $w_h$ to be the Fourier mode, $\varphi(\bm{\theta^{\alpha}},\bm{x} )=e^{\iota\bm{\theta^{\alpha}}\cdot\bm{x}/h}$, we have
 \begin{equation*}
 	( R_h \varphi(\bm{\theta^{\alpha}},\cdot ))(\bm{ x})= \left(\sum_{\bm{\kappa}\in W} r_{\bm{\kappa}}e^{\iota \bm{\kappa}\cdot \bm{\theta}^{\bm{\alpha}}}\right)\varphi_{2h}(2\bm{\theta}^{(0,0)},\boldsymbol{x}),\,\,\bm{ x}\in \mathbf{G}_{2h}.
 \end{equation*}
 \begin{definition}\label{def-R-symbol-classical}
 	We call $\widetilde{R}_h(\boldsymbol{\theta^\alpha})=\displaystyle\sum_{\bm{\kappa}\in W} r_{\bm{\kappa}}e^{\iota \bm{\kappa}\cdot \bm{\theta}^{\bm{\alpha}}}$ the restriction symbol of $R_h$.
 \end{definition}

 For staggered meshes, we again generalize the classical restriction symbol to allow restriction from one type of DoF to another.  Following \cite{HM2018LFALaplace}, we give the general form of the Fourier representation of a restriction operator as follows,
 \begin{definition}\label{def-R-symbol-G}
 	Let $\bm{x}$ be a DoF location on grid $\bm{G}_{2h}$ to which $R_h$ restricts, and let $W$ be the set of offsets on grid $\bm{G}_h$ from which we restrict to $\bm{x}$.   We call $\widetilde{R}_h(\boldsymbol{\theta^\alpha})=\displaystyle\left(\sum_{\bm{\kappa}\in W} r_{\bm{\kappa}}e^{\iota \bm{\kappa}\cdot \bm{\theta}^{\bm{\alpha}}}\right)e^{\iota \pi \bm{\alpha}\cdot \bm{x}/h }$ the restriction symbol of $R_h$.
 \end{definition}
 Note that $\widetilde{R}_h(\boldsymbol{\theta^\alpha})$ is independent of the particular $\bm{G}_{2h}$ point, $\bm{x}$, used to define the restriction symbol in \cref{def-R-symbol-G}, since all points on $\bm{G}_{2h}$ differ by integer multiples of $2h$.
 
 Recall from \cref{eq:block_interp} that we consider a block-structured restriction operator,
 \begin{equation*}
 	R = \begin{pmatrix}
 		R_{\bm u} & 0 & 0 \\
 		0 & R_{\bm w} & 0 \\
 		0 & 0 & R_p \end{pmatrix},
 \end{equation*}
 where $R_{\bm u},  R_{\bm w}, R_{ p}$ are $5\times 5$, $3\times 3$, and $2\times 2$ block-structured systems of operators, respectively.  As a result, their symbols are $5\times 5$, $3\times 3$, and $2\times 2$ matrices, determined by the coefficients in the restriction stencils.  Here, we do not give the stencils for these operators, just their symbols.
 
 The symbol for $R_{\bm u}$ can be computed in 4 parts.
 For convenience, we take $\eta_1=(-1)^{\alpha_1}$ and $\eta_2=(-1)^{\alpha_2}$.  The $3\times 3$ sub-block of $\widetilde{R}_{\bm u}(\bm{\theta^\alpha})$, corresponding to the bubble DoFs (in the same ordering as above) is
 \[
 \begin{pmatrix}
 	\frac{1}{4}(e^{\frac{\iota(-\theta_1^{\alpha_1}+\theta_2^{\alpha_2})}{2}}\hspace{-1mm}+\hspace{-0.75mm}e^{\frac{\iota(\theta_1^{\alpha_1}-\theta_2^{\alpha_2})}{2}})\eta_1\eta_2
 	&\hspace{-2mm}\frac{\sqrt{2}}{8}(e^{\frac{\iota \theta_2^{\alpha_2}}{2}}\hspace{-1mm}+\hspace{-0.75mm}e^{\frac{-\iota \theta_2^{\alpha_2}}{2}})\eta_1\eta_2
 	&\hspace{-2mm}\frac{\sqrt{2}}{8}(e^{\frac{\iota \theta_1^{\alpha_1}}{2}}\hspace{-1mm}+\hspace{-0.75mm}e^{\frac{-\iota \theta_1^{\alpha_1}}{2}})\eta_1\eta_2
 	\\
 	\frac{5\sqrt{2}}{8}(e^{\frac{\iota(-\theta_1^{\alpha_1}+\theta_2^{\alpha_2})}{2}}\hspace{-1mm}+\hspace{-0.75mm}e^{\frac{\iota(\theta_1^{\alpha_1}-\theta_2^{\alpha_2})}{2}})\eta_2
 	&   \frac{1}{4}(e^{\frac{\iota \theta_2^{\alpha_2}}{2}}\hspace{-1mm}+\hspace{-0.75mm}e^{\frac{-\iota \theta_2^{\alpha_2}}{2}})\eta_2
 	&  -(e^{\frac{\iota \theta_1^{\alpha_1}}{2}}\hspace{-1mm}+\hspace{-0.75mm}e^{\frac{-\iota \theta_1^{\alpha_1}}{2}})\eta_2
 	\\
 	\frac{5\sqrt{2}}{8}(e^{\frac{\iota(-\theta_1^{\alpha_1}+\theta_2^{\alpha_2})}{2}}\hspace{-1mm}+\hspace{-0.75mm}e^{\frac{\iota(\theta_1^{\alpha_1}-\theta_2^{\alpha_2})}{2}})\eta_1
 	&  -(e^{\frac{\iota \theta_2^{\alpha_2}}{2}}\hspace{-1mm}+\hspace{-0.75mm}e^{\frac{-\iota \theta_2^{\alpha_2}}{2}})\eta_1
 	& \frac{1}{4}(e^{\frac{\iota \theta_1^{\alpha_1}}{2}}\hspace{-1mm}+\hspace{-0.75mm}e^{\frac{-\iota \theta_1^{\alpha_1}}{2}})\eta_1
 \end{pmatrix}.
 \]
 The $2\times 2$ submatrix of $\widetilde{R}_{\bm u}(\bm{\theta^\alpha})$ corresponding to the P1 components of the displacement is diagonal, with entry
 $$1+\frac{1}{2}\big((e^{\iota \theta_1^{\alpha_1}}+e^{-\iota \theta_1^{\alpha_1}})+(e^{\iota \theta_2^{\alpha_2}}+e^{-\iota \theta_2^{\alpha_2}})+ (e^{\iota \theta_1^{\alpha_1}}e^{-\iota \theta_2^{\alpha_2}}+e^{-\iota \theta_1^{\alpha_1}}e^{\iota \theta_2^{\alpha_2}})\big),$$ for both components.  The contributions to the symbol from the P1 DoFs to the bubble DoFs are given by
 \[
 \begin{pmatrix}  \frac{\sqrt{2}}{2}\eta_1\eta_2  & \frac{\sqrt{2}}{2}\eta_1\eta_2 \\
 	\eta_2  &  0 \\
 	0 &  \eta_1
 \end{pmatrix},
 \]
 while those from the bubble DoFs to the P1 DoFs are given by
 \[
 \begin{pmatrix}
 	d_1 & d_2 & d_3\\
 	d_4 & d_5 & d_6
 \end{pmatrix},
 \]
 with,
 \begin{eqnarray*}
 	d_1&=& \frac{3\sqrt{2}}{16}\big(e^{\frac{\iota(\theta_1^{\alpha_1}+\theta_2^{\alpha_2})}{2}}+e^{\frac{-\iota(\theta_1^{\alpha_1}+\theta_2^{\alpha_2})}{2}}\big)
 	-\frac{3\sqrt{2}}{16}\big(e^{\frac{\iota(3\theta_1^{\alpha_1}-\theta_2^{\alpha_2})}{2}}+e^{\frac{\iota(-3\theta_1^{\alpha_1}+\theta_2^{\alpha_2})}{2}}\big),
 	\\
 	d_2&=&  -\frac{3}{8}\big(e^{\frac{\iota (2\theta_1^{\alpha_1}+\theta_2^{\alpha_2})}{2}}+e^{\frac{-\iota (2\theta_1^{\alpha_1}+\theta_2^{\alpha_2})}{2}}\big)
 	+ \frac{3}{8}\big(e^{\frac{\iota (-2\theta_1^{\alpha_1}+\theta_2^{\alpha_2})}{2}}+e^{\frac{\iota (2\theta_1^{\alpha_1}-\theta_2^{\alpha_2})}{2}}\big),
 	\\
 	d_3&=&  -\frac{3}{8}\big(e^{\frac{\iota (\theta_1^{\alpha_1}+2\theta_2^{\alpha_2})}{2}}+e^{\frac{-\iota (\theta_1^{\alpha_1}+2\theta_2^{\alpha_2})}{2}}\big)
 	+ \frac{3}{8}\big(e^{\frac{\iota (3\theta_1^{\alpha_1}-2\theta_2^{\alpha_2})}{2}}+e^{\frac{\iota (-3\theta_1^{\alpha_1}+2\theta_2^{\alpha_2})}{2}}\big),
 	\\
 	d_4& =& \frac{3\sqrt{2}}{16}\big(e^{\frac{\iota (\theta_1^{\alpha_1}+\theta_2^{\alpha_2})}{2}}+e^{\frac{-\iota (\theta_1^{\alpha_1}+\theta_2^{\alpha_2})}{2}}\big)
 	-\frac{3\sqrt{2}}{16}\big(e^{\frac{\iota (\theta_1^{\alpha_1}-3\theta_2^{\alpha_2})}{2}}+e^{\frac{\iota (-\theta_1^{\alpha_1}+3\theta_2^{\alpha_2})}{2}}\big),
 	\\
 	d_5&= &  -\frac{3}{8}\big(e^{\frac{\iota (2\theta_1^{\alpha_1}+\theta_2^{\alpha_2})}{2}}+e^{\frac{-\iota (2\theta_1^{\alpha_1}+\theta_2^{\alpha_2})}{2}}\big)
 	+ \frac{3}{8}\big(e^{\frac{\iota (-2\theta_1^{\alpha_1}+3\theta_2^{\alpha_2})}{2}}+e^{\frac{\iota (2\theta_1^{\alpha_1}-3\theta_2^{\alpha_2})}{2}}\big),
 	\\
 	d_6& =&  -\frac{3}{8}\big(e^{\frac{\iota (\theta_1^{\alpha_1}+2\theta_2^{\alpha_2})}{2}}+e^{\frac{-\iota (\theta_1^{\alpha_1}+2\theta_2^{\alpha_2})}{2}}\big)
 	+ \frac{3}{8}\big(e^{\frac{\iota (\theta_1^{\alpha_1}-2\theta_2^{\alpha_2})}{2}}+e^{\frac{\iota (-\theta_1^{\alpha_1}+2\theta_2^{\alpha_2})}{2}}\big).
 \end{eqnarray*}
 The symbol for $R_{\bm w}$ is given by
 \begin{equation*}
 	\begin{pmatrix}
 		(2c_3 + c_4)\eta_1\eta_2&\sqrt{2}c_2\eta_1\eta_2&\sqrt{2}c_1\eta_1\eta_2\\
 		\frac{\sqrt{2}}{2}c_3\eta_2&(2c_2 + c_5)\eta_2&-c_1\eta_2\\
 		\frac{\sqrt{2}}{2}c_3\eta_1&-c_2\eta_1&(2c_1 + c_6)\eta_1\\
 	\end{pmatrix},
 \end{equation*}
 where
 \begin{align*}
 	c_1&=\cos\left (\frac{\theta_1^{\alpha_1}}{2}\right ), &c_2&=\cos\left (\frac{\theta_2^{\alpha_2}}{2}\right ),\\
 	c_3&=\cos\left (\frac{\theta_1^{\alpha_1}-\theta_2^{\alpha_2}}{2}\right ),&c_4&=\cos\left (\frac{\theta_1^{\alpha_1}+\theta_2^{\alpha_2}}{2}\right ),\\
 	c_5&=\cos\left (\frac{2\theta_1^{\alpha_1}-\theta_2^{\alpha_2}}{2}\right ),&c_6&=\cos\left (\frac{\theta_1^{\alpha_1}-2\theta_2^{\alpha_2}}{2}\right ).\\
 \end{align*}
 
 %
 Finally, the symbol for $R_{p}$ is
 \begin{equation*}
 	\eta_1\eta_2
 	\begin{pmatrix}
 		e^{-\frac{\iota}{2}(\theta_1^{\alpha_1}+\theta_2^{\alpha_2})}+2\cos(\frac{\theta_1^{\alpha_1}-\theta_2^{\alpha_2}}{2}) &e^{-\frac{\iota}{2}(\theta_1^{\alpha_1}+\theta_2^{\alpha_2})}\\
 		e^{\frac{\iota}{2}(\theta_1^{\alpha_1}+\theta_2^{\alpha_2})}&     e^{\frac{\iota}{2}(\theta_1^{\alpha_1}+\theta_2^{\alpha_2})}+2\cos(\frac{\theta_1^{\alpha_1}-\theta_2^{\alpha_2}}{2})
 	\end{pmatrix}.
 \end{equation*}
 As in the scalar case discussed in \Cref{sec:LFA}, the $10\times 10$ blocks of $\widetilde{R}_h(\bm{\theta^\alpha})$ are assembled into a single block symbol for restriction given by
 \begin{equation*}
 	\widetilde{\boldsymbol{R}}_h(\boldsymbol{\theta})= \begin{pmatrix}  \widetilde{R}_h(\boldsymbol{\theta^{00}}) &  \widetilde{R}_h(\boldsymbol{\theta^{10}}) &  \widetilde{R}_h(\boldsymbol{\theta^{01}}) &  \widetilde{R}_h(\boldsymbol{\theta^{11}}) \end{pmatrix}\in \mathbb{C}^{10\times 40}.
 \end{equation*}
 %
 
 \section{LFA for Vanka Relaxation}\label{secSM:LFA-Vanka}
 As an overlapping additive Schwarz relaxation scheme, the Vanka relaxation considered here takes the current residual, $\bm{r}^{(j)} = \bm{b} - \mathcal{A}^{\text{RQ}}\bm{x}^{(j)}$, and solves the projected system,
 \begin{equation*}
 	\mathcal{A}^{\text{RQ}}_\ell \hat{\bm{x}}_\ell := V_\ell\mathcal{A}^{\text{RQ}}V_\ell^{\top} \hat{\bm{x}}_\ell=V_\ell\vec{r}^{(j)},
 \end{equation*}
 on each block, $\ell$.  This gives a relaxation scheme with error-propagation operator
 \begin{equation*}
 	I - \omega\mathcal{M}^{-1}\mathcal{A}^{\text{RQ}} = I - \omega\left(\sum_{\ell}V_{\ell}^{\top}D_{\ell} (\mathcal{A}^{\text{RQ}}_{\ell})^{-1}V_{\ell}\right)\mathcal{A}^{\text{RQ}}.
 \end{equation*}
 We find the symbol of $\mathcal{M}^{-1}$ by finding the symbols for its components pieces, following the approach presented in \cite{PFarrell_etal_2019a}.  Consider the space of functions represented by a common  Fourier frequency, $\bm{\theta}$.  For the system under consideration here, this is a 10-dimensional space, which is composed of arbitrary linear combinations of the Fourier modes for each DoF type at frequency $\bm{\theta}$.  As such, there is a one-to-one correspondence between functions in this space and vectors in $\mathbb{C}^{10}$.  When Vanka relaxation is applied to a function in the space, the symbol of $\mathcal{M}^{-1}$ acts as a linear map (matrix) from the coefficient vector describing the function before relaxation to that after relaxation.  Each matrix in the definition of $\mathcal{M}^{-1}$ can be understood by its action on that vector of length 10.  An important consequence of this is that, while the definition of $\mathcal{M}^{-1}$ involves a summation over all patches in the mesh, its symbol can be derived by considering the operators only on a single patch.
 
 While the Fourier symbol is, necessarily, a matrix in $\mathbb{C}^{10\times 10}$, the component pieces are larger, given the 20-DoF patch shown at right of \cref{fig:patches}.  Matrix $V_\ell$ maps between vectors on the infinite mesh considered in LFA to those on the patch, simply by selecting the appropriate Fourier coefficients for each DoF type, duplicating the values associated with each edge DoF, and creating 3 copies of each P0 DoF.  The scaling matrix, $D_\ell$, acts directly on these duplicated DoFs, so is its own Fourier representation.  The ``patch matrix'', $V_\ell\mathcal{A}^{\text{RQ}}V_\ell^{\top}$ is easily computed directly as a $20\times 20$ matrix, whose symbol arises by simply accounting for the ``offsets'' between the locations of the DoFs on the mesh, as described in \cite{PFarrell_etal_2019a}.
 %
 \section{LFA for BSR Relaxation}\label{secSM:LFA-BSR}
 The Fourier representation of exact BSR relaxation (see \Cref{subsec:BSR}) comes from that of
 \begin{equation*}
 	\mathcal{M} =
 	\begin{pmatrix}
 		F & B^{\top}\\
 		B & -C
 	\end{pmatrix},
 \end{equation*}
 where $F$ is the approximation of $A$ used in the relaxation scheme, \cref{eqn:BSR_standard}.  Here, we consider
 \begin{equation}\label{eqn:F-Vanka}
 	F = \begin{pmatrix}
 		A_{{\rm{V}},\bm{u}}^{\text{RQ}}  & 0\\
 		0         & \tau D_{\bm w}
 	\end{pmatrix},
 \end{equation}
 where $D_{\bm w}={\rm{diag}}(M_{\bm w})$, the diagonal of the mass matrix on the RT0 space, and $A_{{\rm V},\bm u}^{\text{RQ}}$ is the matrix representation of the additive Vanka relaxation scheme used to approximate the displacement subsystem.  The symbol for $A_{\rm{V},\bm{u}}^{\text{RQ}}$ is found in the same manner as described previously, while that for $D_{\bm w}$ is a $3\times 3$ diagonal symbol whose entries are given by the diagonal entries of the matrix itself.
 
 Since the symbol for $B$ was already derived above, the only remaining calculation is that of the symbol for the approximation to the Schur complement used.  As described in \Cref{subsec:BSR}, a reasonable approximation of the $\bm{u}$ contribution to $BA^{-1}B^\top$ is by a scaled P0 mass matrix, and the $\bm{w}$ contribution can be computed explicitly if we approximation $M_{\bm w}$ by its diagonal.  This leads to the practical approximation of the true Schur complement of $\mathcal{M}$ as
 \[
 S = \left(\frac{1}{M} + \frac{\alpha^2}{\lambda + 2\mu/d}\right)M_p + \tau B_{\bm{w}} D_{\bm w}^{-1}B_{\bm w}^{\top},
 \]
 whose symbol is directly calculated.
 The use of this approximation leads to a slight modification of the matrix representation of the relaxation scheme, writing
 \begin{equation*}
 	\mathcal{M} =
 	\begin{pmatrix}
 		F & B^{\top}\\
 		B & -C_{1}
 	\end{pmatrix},
 \end{equation*}
 where
 \begin{equation*}
 	C_{1} =  \left(\frac{1}{M} + \frac{\alpha^2}{\lambda + 2\mu/d}\right)M_p-\alpha^2 B_{\bm{u}}( A_{{\rm V},\bm{u}}^{\text{RQ}})^{-1}B_{\bm{u}}^{\top},
 \end{equation*}
 arises from subtracting the true contribution to the Schur complement and adding its approximation.  In this form, the symbol of $\mathcal{M}$ is readily computed.
 
 As direct inversion of $S$ is impractical, we consider an inexact variant of Braess-Sarazin relaxation where we use a single sweep of a weighted Jacobi iteration to approximate solution of the linear system with $S$.  The matrix representation of this iteration is given by
 \begin{equation*}
 	\mathcal{M} =
 	\begin{pmatrix}
 		F & B^{\top}\\
 		B & -C_{2}
 	\end{pmatrix},
 \end{equation*}
 where
 \begin{equation*}
 	C_{2} = \frac{1}{\omega_J}{\rm diag}(S) -\alpha^2 B_{\bm{u}}( A_{{\rm V},\bm{u}}^{\text{RQ}})^{-1}B_{\bm{u}}^{\top}-\tau B_{\bm w}(D_{\bm w})^{-1}B_{\bm w}^{\top}.
 \end{equation*}
 The added relaxation parameter, $\omega_J$, can be determined by optimizing the two-grid LFA convergence factor via brute-force or other approaches \cite{LFAoptAlg}.  The symbol of $C_2$ is again easy to derive given symbols for its component parts.

\bibliographystyle{siam}
\bibliography{biot_refs}

\end{document}